\documentclass[12pt]{amsart}

\usepackage{mathrsfs}
\usepackage{amsfonts}
\usepackage{amssymb}
\usepackage{amsfonts, amscd, amsmath, mathrsfs, amssymb, amsthm, amsxtra, bbding, epsfig, graphicx, latexsym, url, mathbbol, bbold}
\usepackage[papersize={8.3in,11.8in},textwidth=16.8cm,textheight=23.2cm,centering]{geometry}
\usepackage{enumerate}
\usepackage{caption} 

\usepackage{graphicx}
\usepackage{tikz}
\usepackage{float}

\usepackage{xcolor}
\definecolor{cite}{rgb}{0.00,0.60,1.00}
\definecolor{url}{rgb}{1.00,0.10,0.80}
\definecolor{link}{rgb}{0.00,0.00,1.00}
\usepackage[colorlinks,linkcolor=link,urlcolor=url,citecolor=cite,pagebackref,breaklinks]{hyperref}

\hypersetup{
pdfstartpage=1,
pdfstartview=FitH}

\DeclareFontFamily{U}{mathx}{\hyphenchar\font45}
\DeclareFontShape{U}{mathx}{m}{n}{
      <5> <6> <7> <8> <9> <10>
      <10.95> <12> <14.4> <17.28> <20.74> <24.88>
      mathx10
      }{}
\DeclareSymbolFont{mathx}{U}{mathx}{m}{n}
\DeclareMathAccent{\widecheck}{\mathalpha}{mathx}{"71}

\numberwithin{equation}{section}

\allowdisplaybreaks

\newtheorem{theorem}{Theorem}[section]
\newtheorem{lemma}[theorem]{Lemma}
\newtheorem{question}[theorem]{Question}
\newtheorem{definition}[theorem]{Definition}

\newtheorem{corollary}[theorem]{Corollary}
\newtheorem{conjecture}[theorem]{Conjecture}

\makeatletter
\newcounter{roem}
\renewcommand{\theroem}{\Roman{roem}}

\newcommand{\c@org@eq}{}
\let\c@org@eq\c@equation
\newcommand{\org@theeq}{}
\let\org@theeq\theequation

\newcommand{\setroem}{
\let\c@equation\c@roem
 \let\theequation\theroem}

\newcommand{\setarab}{
\let\c@equation\c@org@eq
\let\theequation\org@theeq}
\makeatother

\newtheorem*{claim*}{Claim}

\usepackage{scalerel}

\def\leq{\leqslant}

\def\geq{\geqslant}

\usepackage{graphicx}
\usepackage{tikz}

\begin{document}

\title{On almost perfect additive complements}

\author{Wouter van Doorn, Boning Liu, Quanyu Tang}

\subjclass[2020]{11B13, 11B34}
\keywords{Additive Complements, Sumset, Difference}

\begin{abstract} 
Let $A, B$ be infinite sets of non-negative integers with $n\in A+B$ for all sufficiently large integers $n$. We prove that if $A(x)B(x)-x$ does not tend to infinity, then
$$\limsup_{x\to\infty}\frac{A(x)B(x)}{x}\in\Big[\frac{3}{2}, 2\Big].$$
The lower bound is sharp, and confirms a conjecture of Chen. Our proof is based on the weak limits and the convolution factorization theorem of Gabardo and Lai. A discrete proof of the same bound was also given by Zhang. Motivated by the known spectrum of perfect additive complements, we conjecture that the interval $(3/2, 8/5)$ is also excluded for almost perfect additive complements. 
\end{abstract}

\maketitle

\setcounter{tocdepth}{1}



\section{Introduction}

Let $A, B$ be two subsets of non-negative integers, and $A(x), B(x)$ be the corresponding {\it counting functions} defined by
$$A(x):=\sum_{\substack{a\in A\\0\leq a\leq x}}1,\ B(x):=\sum_{\substack{b\in B\\0\leq b\leq x}}1.$$
\begin{itemize}
	\item $A, B$ are called {\it infinite}, if $A, B$ are both infinite sets.
	\item $A, B$ are called {\it additive complements}, if each sufficiently large integer $n$ can be written as $n=a+b$ for $a\in A, b\in B$.
	\item Additive complements $A, B$ are called {\it almost perfect}, if $A(x)B(x)-x\not\to\infty$. 
	\item Additive complements $A, B$ are called {\it perfect}, if the expression $n=a+b$ is unique for all $n\in\mathbb{Z}_{\geq0}$. 
\end{itemize}

For infinite additive complements, S\'ark\"ozy and Szemer\'edi \cite{SS1994} proved Danzer's conjecture\cite{Da1964}:
$$\limsup_{x\to\infty}\frac{A(x)B(x)}{x}\leq1\implies A(x)B(x)-x\to\infty.$$
The problems were studied by many authors \cite{Da1964,EG1980,Na1959,SS1994}. Chen and Fang \cite{CF2011,CF2013,CF2014,CF2020} extended the result to show that for infinite almost perfect additive complements $A, B$,
$$\limsup_{x\to\infty}\frac{A(x)B(x)}{x}\in[3-\sqrt{3}, 2].$$
Yang and Zhao \cite{YZ2026} improved the lower bound to $4/3$ recently. Chen conjectured that the lower bound of the interval should be $3/2$, which is a sharp bound. In this paper, we confirm Chen's conjecture. The proof is shown in Section \ref{section:Chen_proof}.
\begin{theorem}[Chen's conjecture]\label{thm:chen}
	If $A, B$ are infinite almost perfect additive complements, then
	$$\limsup_{x\to\infty}\frac{A(x)B(x)}{x}\in\Big[\frac{3}{2}, 2\Big].$$
\end{theorem}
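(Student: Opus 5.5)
The upper bound $2$ is already known from Chen and Fang, so only the lower bound $\limsup_{x\to\infty} A(x)B(x)/x\geq 3/2$ needs proof. Following the abstract, we pass to a continuous limit object and then use the convolution factorization theorem of Gabardo and Lai. Suppose, for contradiction, that $\limsup A(x)B(x)/x=c<3/2$ while $A(x)B(x)-x\le M$ for infinitely many $x$.

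\textbf{Step 1: a deficient scale.} Along a sequence $x_k\to\infty$ with $A(x_k)B(x_k)\le x_k+M$, every $n\le x_k$ (beyond a fixed threshold) is represented, and the number of pairs $(a,b)$ with $a,b\le x_k$ is at most $x_k+M$. Hence, within $[0,x_k]$, the representations are unique up to $O(M)$ exceptions. Set $\alpha_k=A(x_k)$ and $\beta_k=B(x_k)$, so $\alpha_k\beta_k\approx x_k$.

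\textbf{Step 2: weak limits.} Form the normalized measures
\[
\mu_k=\frac{1}{\alpha_k}\sum_{a\in A,\ a\le Tx_k}\delta_{a/x_k}\cdot(\text{suitable scaling}),
\]
and similarly $\nu_k$ for $B$, with the scaling chosen so that $\mu_k*\nu_k$ approximates Lebesgue measure on $[0,1]$. Step 1 gives near-uniqueness of representations, and the bound $A(x)B(x)\le (c+o(1))x$ for all large $x$ controls the mass at every scale. Together these give tightness, so subsequences converge weakly to measures $\mu,\nu$ on $[0,\infty)$. In the limit, $\mu*\nu=\lambda$ on $[0,1]$, and the counting bound passes to
\[
\mu([0,t])\,\nu([0,t])\le c\,t \quad\text{for all } t>0.
\]
The normalization (one of $A,B$ possibly sparse, with atoms surviving) must be chosen so that both limits are nontrivial. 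A natural choice is to scale so that the sparser set's elements become atoms of mass $1$. I expect this degeneration issue to require care.

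\textbf{Step 3: factorization.} Apply the Gabardo--Lai theorem: any factorization of Lebesgue measure on an interval, $\mu*\nu=\lambda|_{[0,1]}$ with positive measures supported on $[0,\infty)$, has a tiling-type structure. Concretely, up to exchanging roles, $\mu$ and $\nu$ are built from a nested multiplicative chain $1=m_0\mid m_1\mid\cdots$ of discrete digit sets and an absolutely continuous piece $\lambda|_{[0,\ell]}$, in the spirit of the perfect complement description $A=\sum$ digits.

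\textbf{Step 4: extremal computation.} For such structured pairs, compute
\[
\sup_t \frac{\mu([0,t])\,\nu([0,t])}{t}.
\]
At $t$ just beyond the start of a new block, one factor has a fresh atom or block of mass while the other has mass comparable to $t$, which forces the ratio to be at least $3/2$. The extremal case is digits $\{0,1\}$ at alternating scales, giving $\tfrac{(1+1)\cdot(3/4)}{1}$-type values that equal $3/2$. This would then contradict $c<3/2$. The main obstacle is this step, together with making Step 2 produce measures to which Gabardo--Lai applies. In Step 4, I would first reduce to the first two levels of the chain, where a direct case analysis on the ratio $m_1/m_0$ should already yield $3/2$.
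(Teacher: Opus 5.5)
The genuine gap is the \emph{trivial} complementary pair. Nothing prevents the weak limit from being $(\delta_0,\mathcal{L}_{[0,1]})$ or its swap. Gabardo--Lai says nothing about this pair, and for it $\mu([0,t])\nu([0,t])/t\equiv 1$. So your limiting inequality $\mu([0,t])\nu([0,t])\le ct$ is satisfied and Step 4 produces no contradiction.

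This limit really occurs. For Ma's sets with $d_k\to\infty$, at the deficient scales $N=d_1\cdots d_{2k}-1$ every $a\in A\cap[0,N]$ is less than $N/d_{2k}$, so $\mu_k\to\delta_0$. The degeneration is concentration at $0$, not surviving atoms, and your proposed renormalization cannot remove it:
\begin{enumerate}[(i)]
\item $A(x_k)\to\infty$ because $A$ is infinite, so unit atoms give unbounded mass at $0$;
\item working on $[0,T]$ with $T$ fixed does not help. The next element of $A$ after $N_j$ may lie at distance $\gg N_j$ (as in Ma's sets at $N=m\,d_1\cdots d_{2k-1}-1$ with $1\ll m\ll d_{2k}$), and then the limit on $[0,T]$ is still $(\delta_0,\mathcal{L}_{[0,T]})$.
\end{enumerate}
Under your hypothesis $c<3/2$ this case must be excluded, and nothing at the level of the limit measures excludes it.

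What is missing is a separate discrete argument showing that the trivial case forces $\limsup A(x)B(x)/x=2$. The paper argues as follows.
\begin{enumerate}[(i)]
\item $a_j:=\max(A\cap[0,N_j])=o(N_j)$, since otherwise the pairs $(a_j,b)$ with $b\in B\cap[(1-\varepsilon)N_j,N_j]$ make $\mathcal{C}(N_j)$ unbounded.
\item With $g_j+1:=\min(A\cap(N_j,\infty))$ and $A(u)B(v)\le u+v+O(1)$, one gets $A(g_j)B(g_j)=(1+o(1))g_j$.
\item On a window $[g_j+1,g_j+x_j]$ with $a_j=o(x_j)$ and $x_j=o(N_j)$, the set $B$ gains only $O(1)$ elements. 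The second-moment (additive energy) identity $\sum_n R(n)^2=\sum_n D(n)^2$ shows that all but $O(1)$ residues mod $N_j$ have the form $b'-a'$ with $a',b'<N_j$. For a new $b$, the congruence $b'-a'\equiv b-(g_j+1)\pmod{N_j}$ becomes the equality $(g_j+1)+b'=a'+b$, a forbidden second representation.
\item Counting unique representations in the window then shows that $A$ gains $(1-o(1))A(g_j)$ elements, so $A(g_j+x_j)B(g_j+x_j)\ge(2-o(1))(g_j+x_j)$.
\end{enumerate}

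Apart from this, your plan for the non-trivial case is the paper's. The plain normalization $\mu_j=\frac{1}{A(N_j)}\sum_{a\in A,\,a\le N_j}\delta_{a/N_j}$ (similarly $\nu_j$) already gives $\mu_j*\nu_j\to\mathcal{L}_{[0,1]}$, with no factor $T$ and no use of $c$. The first two Gabardo--Lai factors suffice. The paper evaluates at $t_1=2/M_1-r_\nu$ and $t_2=r_{\nu'}+1/(M_1M_2)$, obtaining $\max\{2/(2-u),\,2/(2-M_2+M_2u)\}\ge(2M_2+2)/(M_2+2)\ge 3/2$ with $u=M_1r_\nu$. Your description of the factorization is inaccurate, since in the infinite type there is no absolutely continuous piece, but only these two factors are used.
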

For perfect additive complements,  B\'ar\'any, Fang and S{\'a}ndor \cite{BFS2023} obtained a detailed description of the {\it spectrum}, including the gap stated below. In Section \ref{section:eg} we give an elementary proof for this gap, which also motivates Conjecture \ref{conj:almost_perfect_gap}.
\begin{theorem}
	If $A, B$ are infinite perfect additive complements, then
	$$\limsup_{x\to\infty}\frac{A(x)B(x)}{x}\in\Big\{\frac{3}{2}\Big\}\cup\Big[\frac{8}{5}, 2\Big].$$
\end{theorem}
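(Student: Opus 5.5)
We would use the structure theorem for perfect additive complements (de Bruijn's theorem) rather than the weak-limit machinery of Section \ref{section:Chen_proof}. The theorem says there are integers $g_1,g_2,\dots\geq 2$, with $M_0=1$ and $M_k=g_1\cdots g_k$, such that every $n\geq0$ has a unique expansion $n=\sum_k d_kM_{k-1}$ with $0\leq d_k<g_k$. It also gives a partition $\mathbb{N}=I_A\sqcup I_B$ such that $A$ (resp.\ $B$) consists of the $n$ whose nonzero digits lie only in positions of $I_A$ (resp.\ $I_B$). Since $A,B$ are infinite, both $I_A$ and $I_B$ are infinite. Merging consecutive positions of the same class, we may assume the classes alternate, at the cost of allowing arbitrary "digit sizes" $g_k\geq2$. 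Write $P_k=\prod_{i\leq k,\,i\in I_A}g_i$ and $Q_k=\prod_{i\leq k,\,i\in I_B}g_i$, so $P_kQ_k=M_k$.

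\textbf{Step 1: an explicit formula for the limsup.} $A(x)B(x)$ is a step function, so the limsup of $A(x)B(x)/x$ is attained just after points where $A$ or $B$ jumps. Take $x\in[M_{k-1},M_k)$ with $k\in I_A$ and leading digit $d$. Then $B(x)=Q_{k-1}$, because no element of $B$ lies in $[M_{k-1},M_k)$. Moreover $A(x)$ is $(d-1)P_{k-1}$ plus the count of $A$-elements below $x-(d-1)M_{k-1}$. We then optimise over the lower digits, peeling off one block at a time. The candidate extremal points are $x=(d-1)M_{k-1}+a^{(k-1)}_{\max}$, where $a^{(k-1)}_{\max}=\sum_{i<k,\,i\in I_A}(g_i-1)M_{i-1}$ is the largest element of $A$ below $M_{k-1}$. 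These give
$$\frac{A(x)B(x)}{x}\approx \frac{d}{(d-1)+\alpha_{k-1}},\qquad \alpha_{k-1}=\frac{a^{(k-1)}_{\max}+1}{M_{k-1}}.$$
The maximum is at $d=1$, which gives $1/\alpha_{k-1}$. The symmetric formula with $\beta_{k-1}=(b^{(k-1)}_{\max}+1)/M_{k-1}$ holds for $k\in I_B$. Since $a_{\max}+b_{\max}=M_{k-1}-1$, we get $\alpha+\beta=1+1/M_{k-1}$. We must also check that intermediate points, with partial lower digits, never beat these candidates; this is a short monotonicity argument.

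\textbf{Step 2: reduce to the digit sequence.} Say the block just below position $k$ is position $k-1$, with size $g=g_{k-1}$, and it lies in the same class as position $k-1$ (by alternation, the opposite class from $k$). Then the relevant fraction satisfies the recursion $\alpha_{k-1}\approx \frac{g-1}{g}+\frac{1}{g}\alpha'$, where $\alpha'$ is the fraction of that same class one block further down. Hence the limsup $L$ equals the limsup of $1/\gamma$, where $\gamma$ runs over expressions of the form
$$\gamma=\frac{g_{j}-1}{g_j}+\frac{1}{g_j}\Big(\frac{1}{g_{j-1}}\big(\tfrac{g_{j-2}-1}{g_{j-2}}+\cdots\big)\Big),$$
with alternate blocks contributing a full digit and a zero digit. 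Every such $\gamma$ is at least $\frac{g_j-1}{g_j}\geq\frac12$, which gives $L\leq2$.

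\textbf{Step 3: the gap (main obstacle).} Assume $L<8/5$, so eventually $\gamma>5/8$ at every block. We then show the sizes are eventually $g_j=2$ for all $j$. If some $g_j\geq3$ occurred infinitely often, the block just after it in the other class would force a small fraction there. Concretely, if $g_j\geq 3$, then the complementary fraction $1-\gamma$ at that position is close to $\frac1{g_j}\cdot(\text{something}\geq \dots)$, and a direct estimate gives $1/\gamma\geq 8/5$ infinitely often. With all $g_j=2$ and strict alternation, $\gamma\to \frac12+\frac14\gamma'$ at a fixed point of $\gamma=\frac12+\frac14\cdot\frac12\cdot\ldots$, which evaluates to $\gamma=2/3$ and $L=3/2$. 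Any deviation from pure alternation infinitely often (a merged block, i.e.\ effective size $4$ or more) pushes $1/\gamma$ up to at least $8/5$. The extreme case is two consecutive size-$2$ digits in one class, which gives $\gamma=\frac58$.

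The hard part is this last case analysis. We must compute carefully which finite pattern of block sizes minimises the limsup among non-alternating or non-binary sequences, and show that it yields exactly $8/5$. We would first enumerate patterns with one "defect" repeated periodically and compute $1/\gamma$ via geometric series. Then we would argue that defects only increase the relevant $1/\gamma$ at nearby positions, so the minimiser is periodic.
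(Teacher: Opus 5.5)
Your overall route is the paper's: a structure theorem (de Bruijn here, Fang--S\'andor there) with alternating merged blocks, then a formula for $L=\limsup_x A(x)B(x)/x$ in terms of the block sizes, then a proof that $L<8/5$ forces all blocks to be eventually of size $2$. But the formula you derive is wrong, and this breaks Step 3.

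In Step 1 the point $x=(d-1)M_{k-1}+a^{(k-1)}_{\max}$ has leading digit $d-1$, so $d/((d-1)+\alpha_{k-1})$ is only valid for $d\ge 2$. The choice $d=1$ gives $x=a^{(k-1)}_{\max}<M_{k-1}$, where $B(x)=Q_{k-2}=Q_{k-1}/g_{k-1}$, not $Q_{k-1}$. Taken literally, Step 1 would even give $L\ge 2$ for every pair, since $\alpha_{k-1}\le M_{k-2}/M_{k-1}\le \frac12$. The correct candidates in $[M_{k-1},M_k)$ are $x=dM_{k-1}+a^{(k-1)}_{\max}$ with $d\ge1$. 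Their ratio is about $(d+1)/(d+\alpha_{k-1})$, largest at $d=1$, and $\alpha_{k-1}$ is, up to $O(1/M_{k-1})$, the paper's alternating sum $D_{k-1}$. This gives Ma's formula $L=\limsup_k 2/(1+D_k)$. That no intermediate point does better is the substantive part of Ma's theorem; cite it rather than calling it a short monotonicity check.

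Step 2 then silently uses $L=\limsup 1/\gamma$ with $\gamma=1-D$. These are the ratios at $x=a^{(k-1)}_{\max}$ with $B(x)$ counted correctly, i.e.\ at points with maximal leading digit. At the same level they are dominated by the leading-digit-one points. When all blocks have size $2$ the maximal leading digit is $1$, so the two families coincide; that is why the binary alternating case came out as $\frac32$.

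For constant block size $3$ this fails. Take base $3$, with $A$ on the even digit positions and $B$ on the odd ones. Then $D=\frac14$ and $L=\frac85$; for example $A(101)B(101)=18\cdot 9=162$, so the ratio is $162/101$. Your formula returns $\frac43<\frac32$, below even Chen's bound. So the Step 3 assertion that a block of size at least $3$ forces $1/\gamma\ge\frac85$ is false in your own framework. The extremal case is misidentified as well. Merged blocks of size $4$ force values at least $\frac53$, and an isolated one among $2$'s gives about $\frac{12}{7}$. The value $\frac85$ comes from blocks of size $3$.

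With the correct formula, no enumeration of defect patterns or periodicity argument is needed. In terms of $D=1-\gamma$, your Step 2 recursion reads $D_k=(1-D_{k-1})/g_k$.

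\begin{itemize}
\item If $g_k=g\ge3$, then either $D_{k-1}\le\frac1{g+1}$ or $D_k<(1-\frac1{g+1})/g=\frac1{g+1}$. So one of $2/(1+D_{k-1})$ and $2/(1+D_k)$ is at least $\frac{2(g+1)}{g+2}\ge\frac85$. Hence infinitely many blocks of size at least $3$ give $L\ge\frac85$.
\item Otherwise eventually $D_k=(1-D_{k-1})/2$. Then $D_k-\frac13=-\frac12(D_{k-1}-\frac13)\to0$, so $L=\frac32$.
\item In all cases $D_k\ge0$ gives $L\le2$.
\end{itemize}

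This is exactly the paper's argument, which places $\liminf_k D_k$ in $[\frac1{2G-1},\frac1{G+1}]$ with $G=\limsup_k g_k$.
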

\begin{conjecture}\label{conj:almost_perfect_gap}
	If $A, B$ are infinite almost perfect additive complements, then
	$$\limsup_{x\to\infty}\frac{A(x)B(x)}{x}\in\Big\{\frac{3}{2}\Big\}\cup\Big[\frac{8}{5}, 2\Big].$$
\end{conjecture}

\subsection*{Use of AI tools}

We independently proved Theorem \ref{thm:perfect}, which led us to consider a more general conjecture. ChatGPT 5.6 Pro identified the applicability of the result of Gabardo and Lai \cite{GL2014} and generated an initial proof of Chen's conjecture (Theorem \ref{thm:chen}). We checked the proof and gave a refined version. The initial AI-generated proof has also been formalized and verified in Lean. The original proof and its Lean formalization are available in \cite{erdos785-lean}.

\section{Preliminary Lemmas}

Let $r(n)$ be the number of representations of $n$ as $a+b$, defined by
$$r(n)=r_{A, B}(n):=\#\{(a, b)\in A\times B \mid a+b=n\}.$$
We call $(a, b)\in A\times B$ a {\it crossing pair} of level $N$, if $a,b\leq N<a+b$, and let $\mathcal{C}(N)$ be the number of all crossing pairs of level $N$,
$$\mathcal{C}(N)=\mathcal{C}_{A, B}(N):=\#\{(a, b)\in A\times B \mid 0\leq a,b\leq N<a+b\}.$$

The following lemma will provide some basic properties of the infinite almost perfect additive complements. 

\begin{lemma}\label{lm:everything}
	Let $A, B$ be infinite almost perfect additive complements. Let 
	$$L=\liminf_{n\to\infty} (A(n)B(n)-n)>-\infty.$$
	Take an increasing integer sequence $\{N_j\}$ such that $A(N_j)B(N_j)-N_j=L$. Then we have
	
	$(i)$. There exists a constant $n_0$ such that
	\begin{equation}\label{eq:r(n)=1}
		r(n)=1
	\end{equation}
	for all $n>n_0$. 
	
	$(ii)$. There exists a constant $C_0$ such that
	\begin{equation}\label{eq:A(u)B(v)<u+v}
		A(u)B(v)\leq u+v+C_0
	\end{equation}
	for all $u, v\in\mathbb{Z}_{\geq0}$.

	$(iii)$. The number of crossing pairs of level $N_j$ is
	\begin{equation}\label{eq:C(N_j)=O(1)}
		\mathcal{C}(N_j)=O(1), j\to\infty.
	\end{equation}
	Particularly, $N_j\not\in A\cup B$ for all large $N_j$.
\end{lemma}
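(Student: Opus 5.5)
The plan rests on one counting identity. For any $N\geq 0$, the pairs $(a,b)\in A\times B$ with $a,b\leq N$ split according to whether $a+b\leq N$ or $a+b>N$. Every pair with $a+b\leq N$ automatically has $a,b\leq N$. Hence
$$A(N)B(N)=\sum_{n=0}^{N} r(n)+\mathcal{C}(N).$$
Since $A,B$ are additive complements, there is $n_1$ with $r(n)\geq 1$ for all $n>n_1$. Writing $r(n)=1+(r(n)-1)$ for $n>n_1$, we get
$$A(N)B(N)-N=\mathcal{C}(N)+\sum_{n_1<n\leq N}(r(n)-1)+O(1),$$
where the $O(1)$ depends only on $n_1$ (namely $\sum_{n\leq n_1} r(n)-n_1$). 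Both $\mathcal{C}(N)$ and the sum $S(N):=\sum_{n_1<n\leq N}(r(n)-1)$ are non-negative, and $S(N)$ is non-decreasing in $N$.

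For (i) and (iii), evaluate this at $N=N_j$, where the left side equals $L$. This gives $\mathcal{C}(N_j)+S(N_j)\leq L+O(1)$ uniformly in $j$, so (iii) follows immediately. Since $S$ is monotone and bounded along the unbounded sequence $N_j$, $S$ is bounded on all of $\mathbb{Z}_{\geq 0}$. Therefore $r(n)\geq 2$ for only finitely many $n>n_1$, which yields (i) with a suitable $n_0$.

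For the consequence $N_j\notin A\cup B$, suppose $N_j\in A$. Then every $b\in B$ with $1\leq b\leq N_j$ gives a crossing pair $(N_j,b)$ of level $N_j$. This forces $\mathcal{C}(N_j)\geq B(N_j)-1$, which tends to infinity because $B$ is infinite, contradicting (iii) for large $j$. The case $N_j\in B$ is symmetric.

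For (ii), every pair $(a,b)$ with $a\leq u$ and $b\leq v$ has $a+b\leq u+v$. Hence
$$A(u)B(v)\leq\sum_{n=0}^{u+v} r(n)\leq (u+v+1)+\sum_{n\leq n_0}\max(r(n)-1,0),$$
using (i) for $n>n_0$. The last sum is a fixed finite constant, since $r(n)\leq n+1$. So $C_0$ can be taken to be that constant plus $1$.

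I do not expect a genuine obstacle; everything follows from the identity above. The only care needed is bookkeeping for small $n$, where $r(n)$ may be $0$ or larger than $1$. This is absorbed into constants by separating $n\leq n_1$ (respectively $n\leq n_0$). The one slightly non-obvious step is passing from boundedness along $N_j$ to boundedness of $S$ everywhere, which uses monotonicity of $S$.
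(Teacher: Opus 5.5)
Your proof is correct and follows essentially the same route as the paper: the identity $A(N)B(N)-N=1+\sum_{n\le N}(r(n)-1)+\mathcal{C}(N)$ evaluated along $N_j$ gives (i) and (iii), the bound $A(u)B(v)\le\sum_{n\le u+v}r(n)$ gives (ii), and the crossing pairs $(N_j,b)$ rule out $N_j\in A\cup B$. You are slightly more explicit than the paper in using the monotonicity of the partial sums $S(N)$, which is exactly what turns boundedness along $\{N_j\}$ into $r(n)=1$ for all large $n$.
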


\begin{proof}
	$(i)$. For additive complements $A, B$ and integer $N$, we have
	$$A(N)B(N)-N=1+\sum_{0\leq n\leq N}(r(n)-1)+\mathcal{C}(N).$$
	Hence for $N=N_j$ we have
	$$L=\sum_{0\leq n\leq N_j}(r(n)-1)+\mathcal{C}(N_j)+1.$$
	Since $\mathcal{C}(N)\geq0$, We have
	$$\sum_{0\leq n\leq N_j}(r(n)-1)\leq L-1$$
	uniformly for all $N_j$, and $r(n)\geq1$ for large $n$. Thus $r(n)-1=0$ for all large $n$.

	$(ii)$. From \eqref{eq:r(n)=1} we have
	$$A(u)B(v)= \sum_{0\leq a\leq u}\sum_{0\leq b\leq v}1\leq \sum_{0\leq n\leq u+v}r(n)\leq \sum_{0\leq n\leq n_0}(n+1)+\sum_{n_0<n\leq u+v}1.$$
	It suffices to take $C_0=n_0^2+114514$.

	$(iii)$. Following (i), the number of crossing pairs
	$$\mathcal{C}(N_j)=L-1-\sum_{0\leq n\leq N_j}(r(n)-1)$$
	is also bounded, and is eventually a constant for $N_j>n_0$.

	Furthermore, suppose $N_j\in A$, then $(N_j, b)$ is a crossing pair for all $b\in B\cap(0, N_j]$, and
	$$\mathcal{C}(N_j)\geq B(N_j)-1\to\infty,$$
	a contradiction. For the same reason we have $N_j\not\in A\cup B$. 
\end{proof}

The next lemma shows the uniqueness of the differences in the sense of the congruence, which is crucial for us to estimate the trivial case. 

\begin{lemma}\label{lm:unique_difference}
	Following the notations in Lemma \ref{lm:everything}. For all but at most $O(1)$ exceptional $n\in\mathbb{Z}\cap[0, N_j)$, there exists a unique pair $(a, b)\in (A\cap[0, N_j))\times(B\cap[0, N_j))$ such that
	$$b-a\equiv n\ (\mathrm{mod}\ N_j).$$
\end{lemma}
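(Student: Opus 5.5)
The plan is a counting argument: there are exactly $N_j+L$ pairs, and only $O(1)$ of them are involved in any collision modulo $N_j$. Fix a large $N=N_j$ and consider the map
$$\Phi:(A\cap[0,N))\times(B\cap[0,N))\to\mathbb{Z}/N\mathbb{Z},\qquad (a,b)\mapsto b-a \bmod N.$$
By Lemma \ref{lm:everything}(iii), $N\notin A\cup B$ for large $j$. Hence the domain has exactly $A(N)B(N)=N+L$ elements, where $L$ is a fixed constant. The statement says that all but $O(1)$ residues have exactly one preimage. So it suffices to show that the set $P$ of pairs lying in a fibre of size $\geq2$ satisfies $|P|=O(1)$. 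Indeed, the image then has at least $(N+L)-|P|\geq N-O(1)$ elements, so at most $O(1)$ residues are missed. Also, at most $|P|/2$ residues have more than one preimage. Together the exceptional residues number $O(1)$.

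To bound $|P|$, take two distinct pairs $(a,b)\neq(a',b')$ with $b-a\equiv b'-a'\pmod N$. Since all entries lie in $[0,N)$, the integer $b-a-(b'-a')$ is $0$ or $\pm N$.

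\emph{Case $b-a=b'-a'$.} Then $a+b'=a'+b$, and $(a,b')\neq(a',b)$ because the original pairs are distinct. So this common value $n$ has $r(n)\geq2$, and by Lemma \ref{lm:everything}(i) we get $n\leq n_0$. Hence $a,b,a',b'\leq n_0$, so only $O(1)$ pairs arise this way.

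\emph{Case $b-a=b'-a'+N$ (the other sign is symmetric).} Then $a'+b=a+b'+N\geq N$, with $a',b\leq N$. If $a'+b>N$, then $(a',b)$ is a crossing pair of level $N$. By Lemma \ref{lm:everything}(iii), the number of such pairs is $O(1)$. If $a'+b=N$, then $a+b'=0$, so $a=b'=0$, which gives $O(1)$ further possibilities. Hence the quantity $a'+b$, and therefore $s=a+b'=a'+b-N$, takes only $O(1)$ values. For each value of $s$, the number of pairs $(a,b')$ with $a+b'=s$ is $r(s)$. This is $1$ if $s>n_0$ and at most $n_0+1$ otherwise. So $(a,b')$ ranges over $O(1)$ possibilities. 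Given $(a,b')$ and the crossing pair $(a',b)$, both original pairs $(a,b)$ and $(a',b')$ are determined. So this case also contributes only $O(1)$ pairs to $P$.

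Combining the two cases gives $|P|=O(1)$, with a bound independent of $j$, which finishes the argument. The step I expect to need the most care is the second case. There one must see that a wrap-around collision forces a crossing pair, and that the crossing pair then pins down the partner pair via $r\leq1$ for large sums. It is the bound $\mathcal C(N_j)=O(1)$ that makes the whole count uniform in $j$.
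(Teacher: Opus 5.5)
Your proof is correct. It is a pointwise version of the paper's argument rather than a copy of it.

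The paper never looks at individual collisions. It introduces $R(n)$ and $D(n)$, the numbers of pairs in $(A\cap[0,N_j))\times(B\cap[0,N_j))$ with $a+b\equiv n$ and with $b-a\equiv n \pmod{N_j}$. It then argues in three steps:
\begin{enumerate}
\item It proves the energy identity $\sum_n R(n)^2=\sum_n D(n)^2$. This comes from rereading $a_1+b_1\equiv a_2+b_2$ as $b_1-a_2\equiv b_2-a_1$.
\item It bounds $\sum_n R(n)(R(n)-1)=O(1)$. To do so it splits $R(n)=r^{(j)}(n)+r^{(j)}(n+N_j)$ and uses $r(n)=1$ for $n>n_0$ together with $\sum_{n<N_j}r^{(j)}(n+N_j)=\mathcal{C}(N_j)+r(N_j)=O(1)$.
\item From $\sum_n D(n)=N_j+L$ it concludes $\#\{n: D(n)\neq 1\}\leq \sum_n D(n)(D(n)-1)+O(1)$.
\end{enumerate}

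Your argument applies the same swap to each collision. A difference collision $(a,b),(a',b')$ becomes the sum coincidence $a+b'\equiv a'+b$. Your two cases, exact equality and wrap-around by $\pm N$, are exactly the paper's split into $r^{(j)}(n)$ and $r^{(j)}(n+N_j)$.

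The energy route is shorter. It avoids the sign case analysis and the bookkeeping of which pair determines which. Your route needs no second-moment identity and says where the collisions live. Every colliding pair either has both entries at most $n_0$, or is attached to one of the $O(1)$ pairs $(a',b)$ with $a'+b\geq N$. This gives an explicit bound such as $|P|\leq (n_0+1)^2+4(n_0+1)\bigl(\mathcal{C}(N)+r(N)\bigr)$.

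One expository point needs fixing. In the subcase $a'+b=N$, the pair $(a',b)$ is not a crossing pair. So your sentence ``given $(a,b')$ and the crossing pair $(a',b)$'' does not cover it. You need $r(N)=1$ from Lemma \ref{lm:everything}(i) to bound the number of such $(a',b)$; noting only that $a=b'=0$ is not enough. With that, the set of admissible $(a',b)$ has size at most $\mathcal{C}(N)+r(N)$, which is the same quantity the paper uses.
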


\begin{proof}
	Recall that $N_j\not\in A\cup B$ for large $N_j$. For integer $n\in[0, N_j)$, Define
	$$R(n):=\#\{(a, b)\in A\times B\mid 0\leq a,b< N_j,\ a+b\equiv n\ (\mathrm{mod}\ N_j)\},$$
	$$D(n):=\#\{(a, b)\in A\times B\mid 0\leq a,b< N_j,\ b-a\equiv n\ (\mathrm{mod}\ N_j)\}.$$
	We want to show that $D(n)=1$ for almost all $n$. Notice that
	$$\sum_{0\leq n<N_j}R(n)=\sum_{0\leq n<N_j}\sum_{\substack{a\in A,\, b\in B\\0\leq a,b<N_j\\a+b\equiv n(N_j)}}1=\sum_{a, b}1=A(N_j)B(N_j)=\sum_{0\leq n<N_j}D(n),$$
	$$\sum_{0\leq n<N_j}R(n)^2=\sum_{0\leq n<N_j}\sum_{\substack{a_1, a_2, b_1, b_2\\a_1+b_1\equiv a_2+b_2\equiv n(N_j)}}1=\sum_{\substack{a_1, a_2, b_1, b_2\\a_1+b_1\equiv a_2+b_2(N_j)}}1=\sum_{\substack{a_1, a_2, b_1, b_2\\b_1-a_2\equiv b_2-a_1(N_j)}}1=\sum_{0\leq n<N_j}D(n)^2,$$
	where $u\equiv v\ (N_j)$ is an abbreviation for $u\equiv v\ (\mathrm{mod}\ N_j)$. Then we have
	\begin{equation}
		\sum_{0\leq n<N_j}R(n)(R(n)-1)=\sum_{0\leq n<N_j}D(n)(D(n)-1).
	\end{equation}
	To estimate $R(n)$, define
	$$r^{(j)}(n):=\#\{(a, b)\in A\times B\mid 0\leq a,b< N_j,\ a+b=n\}\leq r(n),$$
	then for $n<N_j$,
	$$R(n)=r^{(j)}(n)+r^{(j)}(n+N_j).$$
	From \eqref{eq:r(n)=1} and \eqref{eq:C(N_j)=O(1)} we have 
	\begin{align*}
		r(n)&=O(1),\\
		\sum_{n\geq0}r(n)(r(n)-1)&=O(1),\\
		\sum_{0\leq n<N_j}r^{(j)}(n+N_j)&=\mathcal{C}(N_j)+r(N_j)=O(1).
	\end{align*}
	Then
	\begin{align*}
		\sum_{0\leq n<N_j}R(n)(R(n)-1)
		& = \sum_{0\leq n<N_j}(r^{(j)}(n)+r^{(j)}(n+N_j))(r^{(j)}(n)+r^{(j)}(n+N_j)-1)\\
		& = \sum_{0\leq n<2N_j}r^{(j)}(n)(r^{(j)}(n)-1)+\sum_{0\leq n<N_j}2r^{(j)}(n)r^{(j)}(n+N_j)\\
		& = O(1).
	\end{align*}	
	Hence $\sum_nD(n)(D(n)-1)=O(1)$. Recall $A(N_j)B(N_j)=N_j+O(1)$, then
	\begin{align*}
		\sum_{\substack{0\leq n<N_j\\D(n)\neq1}}1
		& = N_j-\sum_{\substack{0\leq n<N_j\\D(n)=1}}1\\
		& = \sum_{0\leq n<N_j}D(n)+O(1)-\sum_{\substack{0\leq n<N_j\\D(n)=1}}1\\
		& = \sum_{\substack{0\leq n<N_j\\D(n)\geq2}}D(n)+O(1)\\
		& \leq \sum_{0\leq n<N_j}D(n)(D(n)-1)+O(1)=O(1),
	\end{align*}
	which means the number of the exceptional $n$ is $O(1)$. 
\end{proof}

For $\mu$ a nonzero finite positive Borel measure with compact support, define the left and right endpoints of its support by
$$\ell_\mu=\min\mathrm{supp}\, \mu,\ r_\mu=\max\mathrm{supp}\ \mu.$$

\begin{lemma}\label{lm:supp_convolution}
	Let $\mu, \nu$ be two nonzero finite positive Borel measures with compact supports. Then
	$$\ell_{\mu*\nu}=\ell_\mu+\ell_\nu,\, r_{\mu*\nu}=r_\mu+r_\nu.$$
\end{lemma}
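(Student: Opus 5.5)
We argue by proving the two containments in each equality, treating only the left endpoint. The right endpoint follows by the same argument, or by applying the left-endpoint statement to the reflected measures $\tilde\mu(E)=\mu(-E)$ and $\tilde\nu(E)=\nu(-E)$, since $\widetilde{\mu*\nu}=\tilde\mu*\tilde\nu$ and $r_\mu=-\ell_{\tilde\mu}$. Throughout, recall that $\mu*\nu(E)=\int\int \mathbf{1}_E(x+y)\,d\mu(x)\,d\nu(y)$, and that the supports are nonempty compact sets because the measures are nonzero with compact support. This guarantees that $\ell_\mu,\ell_\nu$ exist, and that $\mu*\nu$ is a nonzero finite positive measure with total mass $\mu(\mathbb{R})\nu(\mathbb{R})>0$.

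First we show $\ell_{\mu*\nu}\geq \ell_\mu+\ell_\nu$. Since $\mu$ is concentrated on $[\ell_\mu,\infty)$ and $\nu$ on $[\ell_\nu,\infty)$, we have
$$\mu*\nu\big((-\infty,\ell_\mu+\ell_\nu)\big)=\iint \mathbf{1}\{x+y<\ell_\mu+\ell_\nu\}\,d\mu(x)\,d\nu(y)=0,$$
because the integrand vanishes on $[\ell_\mu,\infty)\times[\ell_\nu,\infty)$, which carries the full product measure. Hence $\mathrm{supp}\,\mu*\nu\subset[\ell_\mu+\ell_\nu,\infty)$. (More generally $\mathrm{supp}\,\mu*\nu\subset \mathrm{supp}\,\mu+\mathrm{supp}\,\nu$, which is compact, so $\mu*\nu$ indeed has compact support.)

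Next we show $\ell_\mu+\ell_\nu\in\mathrm{supp}\,\mu*\nu$, which is the step requiring positivity. Fix $\varepsilon>0$. Since $\ell_\mu\in\mathrm{supp}\,\mu$ and $\ell_\nu\in\mathrm{supp}\,\nu$, we have $\mu((\ell_\mu-\varepsilon/2,\ell_\mu+\varepsilon/2))>0$ and $\nu((\ell_\nu-\varepsilon/2,\ell_\nu+\varepsilon/2))>0$. Every $x+y$ with $x,y$ in these respective intervals lies in $(\ell_\mu+\ell_\nu-\varepsilon,\ell_\mu+\ell_\nu+\varepsilon)$. Therefore
$$\mu*\nu\big((\ell_\mu+\ell_\nu-\varepsilon,\ell_\mu+\ell_\nu+\varepsilon)\big)\geq \mu\big((\ell_\mu-\tfrac{\varepsilon}{2},\ell_\mu+\tfrac{\varepsilon}{2})\big)\,\nu\big((\ell_\nu-\tfrac{\varepsilon}{2},\ell_\nu+\tfrac{\varepsilon}{2})\big)>0,$$
using that both measures are positive, so no cancellation can occur. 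Since $\varepsilon$ is arbitrary, $\ell_\mu+\ell_\nu$ lies in the support, and combined with the first step this gives $\ell_{\mu*\nu}=\ell_\mu+\ell_\nu$.

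The only real content is this last step, where positivity is essential: for signed measures the lower bound for the mass near $\ell_\mu+\ell_\nu$ fails. The remaining points are routine measure-theoretic facts, namely that the support is closed so the minimum exists, and Fubini applied to the indicator functions.
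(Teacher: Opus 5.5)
Your proof is correct and follows the paper's proof: the inequality $\ell_{\mu*\nu}\geq\ell_\mu+\ell_\nu$ comes from $\mu,\nu$ being concentrated on $[\ell_\mu,\infty)$ and $[\ell_\nu,\infty)$, and the reverse inequality from the product lower bound $\mu*\nu(I)\geq\mu(I_1)\nu(I_2)>0$ on small neighbourhoods of $\ell_\mu+\ell_\nu$. The differences are cosmetic: you use symmetric open intervals where the paper uses half-open ones $[\ell_\mu,\ell_\mu+\varepsilon)$, and you derive the right endpoint by reflection where the paper only says ``similarly''.
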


\begin{proof}
	By the definition we have
	\begin{equation}\label{eq:left_1}
		\ell_{\mu*\nu}\geq \ell_\mu+\ell_\nu.
	\end{equation}

	For any $\varepsilon>0$, $\ell_\mu\in\mathrm{supp}\ \mu$ and $\ell_\nu\in\mathrm{supp}\ \nu$ imply
	$$\mu([\ell_\mu, \ell_\mu+\varepsilon))>0,\ \nu([\ell_\nu, \ell_\nu+\varepsilon))>0,$$
	from which we find
	$$(\mu*\nu)([\ell_\mu+\ell_\nu, \ell_\mu+\ell_\nu+2\varepsilon))\geq\mu([\ell_\mu, \ell_\mu+\varepsilon))\nu([\ell_\nu, \ell_\nu+\varepsilon))>0.$$
	That is, any small open neighbourhood of $\ell_\mu+\ell_\nu$ has a positive measure. Hence $\ell_\mu+\ell_\nu\in\mathrm{supp}(\mu*\nu)$, and
	\begin{equation}\label{eq:left_2}
		\ell_{\mu*\nu}\leq \ell_\mu+\ell_\nu.
	\end{equation}
	Combining this with \eqref{eq:left_1}, we have $\ell_{\mu*\nu}=\ell_\mu+\ell_\nu$. The case for $r_{\mu*\nu}$ follows similarly.
\end{proof}

Let $\mathcal{L}_E$ be {\it normalized Lebesgue measure} supported on the given Borel set $E$ of finite positive Lebesgue measure. Particularly, $\mathcal{L}_{[0, 1]}$ is the {\it standard Lebesgue measure} on $[0, 1]$. Let $\delta_x$ be the {\it Dirac measure} at the point $x$.

Let $\mu, \nu$ be two Borel probability measures supported on $[0, 1]$. $(\mu, \nu)$ is called a {\it complementary pair} of measures with respect to $\mathcal{L}_{[0, 1]}$ if
$$\mu*\nu=\mathcal{L}_{[0, 1]}.$$
Particularly, we have the {\it trivial} complementary pair $(\mu, \nu)=(\delta_0, \mathcal{L}_{[0, 1]})$ and $(\mathcal{L}_{[0, 1]}, \delta_0)$.

In 2014, Gabardo and Lai \cite[Thm 2.2]{GL2014} completely decided the structure of complementary pair with respect to $\mathcal{L}_{[0, 1]}$. We use their first two factors of the decomposition for Lemma \ref{lm:GL}. 

\begin{theorem}[Gabardo--Lai, 2014]
	Let $(\mu, \nu)$ be a non-trivial complementary pair of measures with respect to $\mathcal{L}_{[0, 1]}$, then there exist integers $M_1, M_2\geq2$ and Borel probability measures $\mu', \nu'$ supported in $[0, (M_1M_2)^{-1}]$ with $\mu'*\nu'=\mathcal{L}_{[0, \frac{1}{M_1M_2}]}$, such that either
	$$\mu=\Big(\frac{1}{M_1}\sum_{j=0}^{M_1-1}\delta_{\frac{j}{M_1}}\Big)*\mu',\ \nu=\Big(\frac{1}{M_2}\sum_{j=0}^{M_2-1}\delta_{\frac{j}{M_1M_2}}\Big)*\nu',$$
	or $\mu, \nu$ interchanged.
\end{theorem}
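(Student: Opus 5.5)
Since this theorem is quoted from \cite{GL2014}, I would not reprove it in full. Instead I would reduce it to the classical discrete structure theorem for direct-sum decompositions of an integer interval, which is due to de Bruijn and Long: if $A\oplus B=\{0,1,\dots,N-1\}$, then there is a chain of bases $1=m_0\mid m_1\mid\cdots\mid m_k=N$ such that $A$ collects the digits in alternate positions and $B$ the remaining ones. The first block of this chain is exactly the shape $\frac1{M_1}\sum_{j<M_1}\delta_{j/M_1}$ together with $\frac1{M_2}\sum_{j<M_2}\delta_{j/(M_1M_2)}$ in the statement. The plan is to extract this first block directly at the level of measures, using Fourier transforms.

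\emph{Step 1 (supports).} By Lemma \ref{lm:supp_convolution}, $\ell_\mu+\ell_\nu=0$ and $r_\mu+r_\nu=1$. Since both measures live on $[0,1]$, this gives $\ell_\mu=\ell_\nu=0$, and nontriviality means $r_\mu,r_\nu\in(0,1)$.

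\emph{Step 2 (Fourier).} Write $\widehat\mu(\xi)\widehat\nu(\xi)=\frac{e^{2\pi i\xi}-1}{2\pi i\xi}$. Both factors are entire of exponential type. All zeros are simple and lie at $\mathbb Z\setminus\{0\}$, so these integers split into two disjoint sets $Z_\mu\sqcup Z_\nu$. I would then run a Long-type argument on these zero sets. Without loss of generality $1\in Z_\mu$, and I let $M_1\ge2$ be the least positive integer not in $Z_\mu$, so $M_1\in Z_\nu$.

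\emph{Step 3 (factorization).} I would show that the zero set of $\widehat\mu$ contains every integer not divisible by $M_1$, and that $\widehat\mu(\xi)/\widehat{D}_{M_1}(\xi)$ is the Fourier transform of a positive measure $\mu'$. Here $D_{M_1}=\frac1{M_1}\sum_{j<M_1}\delta_{j/M_1}$, whose transform vanishes exactly on $\mathbb Z\setminus M_1\mathbb Z$. Equivalently, the claim is that $\mu$ is a convolution of $D_{M_1}$ with a measure supported in $[0,1/M_1]$.

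To get the positivity and support control, I would argue geometrically rather than spectrally. The identity $\mu*\nu=\mathcal L_{[0,1]}$ says that the translates $\mu(\cdot-y)$, weighted by $\nu$, pack Lebesgue measure exactly. Restrict to $[0,1/M_1]$ and compare the pieces of $\mu$ on $[j/M_1,(j+1)/M_1)$; a tiling/inclusion–exclusion argument near the left endpoint $0$, of the kind used in Long's proof, should force all $M_1$ pieces to be translates of one another. Dividing out by $D_{M_1}$ rescales the problem to $[0,1/M_1]$. Repeating the same step with $\nu$ at scale $1/M_1$ then yields $M_2\geq 2$ and the factor $\frac1{M_2}\sum_{j<M_2}\delta_{j/(M_1M_2)}$. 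Rescaling to $[0,(M_1M_2)^{-1}]$ gives $\mu'*\nu'=\mathcal L_{[0,1/(M_1M_2)]}$, where the residual measures are probability measures because all the pieces have equal mass.

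\emph{Alternative route and the main obstacle.} If the direct argument proves awkward, I would discretize instead. Push $\mu,\nu$ forward to the grid $\frac1N\mathbb Z$ with $N=K!$ so that all candidate $M_i$ divide $N$. Obtain approximate tilings of $\mathbb Z/N$, apply the de Bruijn–Long theorem, and pass to weak limits using compactness of probability measures on $[0,1]$.

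The main obstacle is Step 3: the passage from information about the zero set to positivity and the support condition $\operatorname{supp}\mu'\subset[0,1/M_1]$. Zeros of $\widehat\mu$ alone do not give this. I would first try to show that $\mu$ is invariant under the shift by $1/M_1$ on the relevant window, exploiting that $0\in\operatorname{supp}\mu\cap\operatorname{supp}\nu$ and that $\mathcal L_{[0,1]}$ has constant density. In the discretized route, the corresponding difficulty is that exact tilings only hold approximately, so de Bruijn's rigidity must be made stable under small perturbations.
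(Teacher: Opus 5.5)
Your proposal has a genuine gap, and it sits exactly where the content of the theorem lies. Everything rests on the claim in Step~3 that $\widehat\mu(k)=0$ for all $k\in\mathbb{Z}\setminus M_1\mathbb{Z}$, together with its analogue for $\nu$ at scale $1/M_1$. You only announce this claim (``I would show'', ``should force''); you never prove it.

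You have also misplaced the difficulty. Positivity and the support condition are \emph{not} the obstacle; they come for free once the vanishing is known. The pushforward $\bar\mu$ of $\mu$ to $\mathbb{R}/\mathbb{Z}$ has Fourier coefficients $\widehat\mu(k)$, so vanishing off $M_1\mathbb{Z}$ says $\bar\mu$ is invariant under rotation by $1/M_1$. Since $\mathrm{supp}\,\mu\subset[0,r_\mu]$ with $r_\mu=1-r_\nu<1$, this gives $\mu=D_{M_1}*\mu'$ with $\mu'=M_1\,\mu|_{[0,1/M_1)}\geq 0$. Lemma~\ref{lm:supp_convolution} then yields $r_{\mu'}+r_\nu=1/M_1$. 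The $M_1$ translates of $[0,1/M_1]$ overlap only at endpoints, so $D_{M_1}*(\mu'*\nu)=\mathcal{L}_{[0,1]}$ forces $\mu'*\nu=\mathcal{L}_{[0,1/M_1]}$. Finally, $\widehat{\mu'}(M_1)=\widehat\mu(M_1)\neq 0$ forces $\widehat\nu(M_1)=0$, so the roles swap at the next scale. Hence, apart from the degenerate case $\mu'=\delta_0$ (where any $M_2$ works), the theorem reduces exactly to one statement: if $\widehat\mu(1)=0$ and $M$ is the least positive integer with $\widehat\mu(M)\neq 0$, then $\widehat\mu$ vanishes on $\mathbb{Z}\setminus M\mathbb{Z}$.

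Nothing in your outline produces that statement.
\begin{itemize}
\item It cannot come from the data you set up in Step~2, namely the splitting of the integer zeros plus positivity. On $\mathbb{R}/\mathbb{Z}$ the densities $1+\frac12\cos 4\pi x+\frac12\cos 6\pi x$ and $1+\cos 2\pi x$ convolve to Haar measure. The first has vanishing coefficient at $1$ but nonzero coefficients at $2$ and at $3$. So a proof must genuinely use that $\mu,\nu$ live on the complementary intervals $[0,r_\mu]$ and $[0,1-r_\mu]$.
\item The mechanism you suggest for this, Long's argument near $0$, has no starting point. Long's induction begins with the smallest positive element of one summand and builds the digit expansion from the finest scale upward. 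Here you need the coarsest scale $1/M_1$ first, and in Type~II there is no finest scale at all. For example, let $\mu$ be the infinite convolution of $\frac12(\delta_0+\delta_{2\cdot 4^{-k}})$, $k\geq 1$, and $\nu$ that of $\frac12(\delta_0+\delta_{4^{-k}})$, $k\geq 1$; then both supports accumulate at $0$.
\item The discretization route has the same hole in another place. Pushing forward to $\frac1N\mathbb{Z}$ does not commute with convolution, so you only get approximate, weighted factorizations of $\mathbf{1}_{[0,N)}$. De Bruijn--Long is a rigid statement about exact $0/1$ direct sums, and the stability version you would need is not supplied.
\end{itemize}

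For comparison, the paper does not prove this statement either; it imports it from \cite[Thm 2.2]{GL2014}. So you should either cite it as the paper does, or supply an actual proof of the vanishing statement above.
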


\begin{lemma}\label{lm:GL}
	Let $(\mu, \nu)$ be a non-trivial complementary pair of measures with respect to $\mathcal{L}_{[0, 1]}$, then there exists $t_1, t_2\in(0, 1)$ such that
	$$\max\Big\{\frac{\mu([0, t_1])\nu([0, t_1])}{t_1}, \frac{\mu([0, t_2])\nu([0, t_2])}{t_2}\Big\}\geq \frac{3}{2}.$$ 
\end{lemma}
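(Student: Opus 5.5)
We will use the Gabardo--Lai theorem directly and evaluate the ratio at two explicit points determined by the decomposition. By symmetry of the expression $\mu([0,t])\nu([0,t])/t$ in $\mu$ and $\nu$, we may assume we are in the first case:
$$\mu=\Big(\frac{1}{M_1}\sum_{j=0}^{M_1-1}\delta_{\frac{j}{M_1}}\Big)*\mu',\qquad \nu=\Big(\frac{1}{M_2}\sum_{j=0}^{M_2-1}\delta_{\frac{j}{M_1M_2}}\Big)*\nu',$$
with $M_1,M_2\geq 2$ and $\mu'*\nu'=\mathcal{L}_{[0,\frac{1}{M_1M_2}]}$. Then $\ell_{\mu'}+\ell_{\nu'}=0$, so $\ell_{\mu'}=\ell_{\nu'}=0$. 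By Lemma \ref{lm:supp_convolution}, $r_{\mu'}+r_{\nu'}=\frac{1}{M_1M_2}$. Write $x:=M_1 r_{\mu'}$, so that $x\in[0,\frac{1}{M_2}]\subset[0,\frac12]$.

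\emph{Choice of $t_1$.} Take $t_1=\frac{1}{M_1}+r_{\mu'}$. This lies in $(0,1)$, since $t_1\leq \frac{1}{M_1}+\frac{1}{M_1M_2}\leq\frac34$. The translates of $\mu'$ by $0$ and $\frac{1}{M_1}$ both lie in $[0,t_1]$, so $\mu([0,t_1])\geq \frac{2}{M_1}$. The support of $\nu$ ends at
$$\frac{M_2-1}{M_1M_2}+r_{\nu'}=\frac{1}{M_1}-r_{\mu'}\leq t_1,$$
so $\nu([0,t_1])=1$. Hence the ratio at $t_1$ is at least $\dfrac{2/M_1}{(1+x)/M_1}=\dfrac{2}{1+x}$.

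\emph{Choice of $t_2$.} Take $t_2=\frac{1}{M_1}-r_{\mu'}$, the right endpoint of $\operatorname{supp}\nu$. We have $t_2\geq \frac{1}{M_1}-\frac{1}{M_1M_2}>0$, so $t_2\in(0,1)$, and $\nu([0,t_2])=1$. Also $r_{\mu'}\leq\frac{1}{M_1M_2}\leq t_2$, so the $j=0$ copy of $\mu'$ lies in $[0,t_2]$ and $\mu([0,t_2])\geq\frac{1}{M_1}$. The ratio at $t_2$ is therefore at least $\dfrac{1}{1-x}$.

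\emph{Conclusion.} It remains to check $\max\{\frac{2}{1+x},\frac{1}{1-x}\}\geq\frac32$ for $x\in[0,\frac12]$. The first function is decreasing in $x$ and the second is increasing, and they cross where $2-2x=1+x$, that is, at $x=\frac13$, with common value $\frac32$. So the maximum is at least $\frac32$. The interchanged case follows by swapping the roles of $\mu$ and $\nu$.

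The only delicate point is the endpoint bookkeeping: we need $\ell_{\mu'}=\ell_{\nu'}=0$ and $r_{\mu'}+r_{\nu'}=\frac{1}{M_1M_2}$ from Lemma \ref{lm:supp_convolution}. These justify that the whole support of $\nu$ lies in $[0,\frac1{M_1}-r_{\mu'}]$ and that the first one or two copies of $\mu'$ fit inside $[0,t]$.
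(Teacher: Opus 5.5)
Your proof is correct and follows essentially the paper's route. It uses the same Gabardo--Lai decomposition, the same $t_1=\frac{1}{M_1}+r_{\mu'}$, and the same one-parameter max--min optimization; your $x$ is $1-u$ in the paper's notation.

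The only difference is the second test point. You take $t_2=r_\nu$, which captures all of $\nu$ and one copy of $\mu'$. The paper takes $t_2=r_{\nu'}+\frac{1}{M_1M_2}$, which captures two copies of $\nu'$ and one copy of $\mu'$. The two choices coincide when $M_2=2$. For $M_2\geq 3$, the paper's choice gives the sharper bound $\frac{2M_2+2}{M_2+2}$ (for example $\frac85$ at $M_2=3$, which the remarks section relies on), whereas yours gives only $\frac{2M_2}{M_2+1}$, which is just $\frac32$ at $M_2=3$.
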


\begin{proof}
	Without loss of generality, suppose 
	$$\mu=\Big(\frac{1}{M_1}\sum_{j=0}^{M_1-1}\delta_{\frac{j}{M_1}}\Big)*\mu',\ \nu=\Big(\frac{1}{M_2}\sum_{j=0}^{M_2-1}\delta_{\frac{j}{M_1M_2}}\Big)*\nu'$$
	for integers $M_1, M_2\geq2$. For convenience, denote
	$$\alpha=\frac{1}{M_1}\sum_{j=0}^{M_1-1}\delta_{\frac{j}{M_1}},$$
	$$\beta=\frac{1}{M_2}\sum_{j=0}^{M_2-1}\delta_{\frac{j}{M_1M_2}}.$$
	Recall that $\ell_\mu, r_\mu$ denote the left and right endpoints of $\mathrm{supp}\ \mu$. By definition we have
	\begin{align*}
		\ell_\alpha=0, & \ r_\alpha=1-\frac{1}{M_1},\\
		\ell_\beta=0, & \ r_\beta=\frac{1}{M_1}-\frac{1}{M_1M_2}.
	\end{align*}
	We apply Lemma \ref{lm:supp_convolution} to the convolution
	$$\mu=\alpha*\mu',\ \nu=\beta*\nu',\ \mu*\nu=\mathcal{L}_{[0, 1]},$$
	then $\ell_\mu=\ell_\nu=\ell_{\mu'}=\ell_{\nu'}=0$ and
	$$r_\mu=1-r_\nu,\ r_{\mu'}=\frac{1}{M_1}-r_\nu,\ r_{\nu'}=r_\nu-\frac{1}{M_1}+\frac{1}{M_1M_2}.$$
	By the decomposition, $\mu$ is just $M_1$ copies of $\mu'$ with each shrinking to $1/M_1$ of the original measure. More precisely, for all $0\leq j\leq M_1-1$ we have
	$$\mu\Big(\Big[\frac{j}{M_1}, \frac{j+1}{M_1}-r_\nu\Big]\Big)=\frac{1}{M_1},$$
	We choose the first two copies of $\mu'$ to get
	\begin{equation}\label{eq:mu(t_1)}
		\mu\Big(\Big[0, \frac{2}{M_1}-r_\nu\Big]\Big)\geq\frac{2}{M_1}.
	\end{equation}
	Notice that $\nu([0, r_\nu])=1$, and from $r_{\mu'}\geq0$ we know $2/M_1-r_\nu\geq r_\nu$, then
	\begin{equation}\label{eq:nu(t_1)}
		\nu\Big(\Big[0, \frac{2}{M_1}-r_\nu\Big]\Big)=\nu([0, r_\nu])=1.
	\end{equation}
	Let $t_1=r_{\mu'}+1/M_1=2/M_1-r_\nu$. From \eqref{eq:mu(t_1)}\eqref{eq:nu(t_1)} we have
	\begin{equation}
		\frac{\mu([0, t_1])\nu([0, t_1])}{t_1}\geq\frac{2}{2-M_1r_\nu}.
	\end{equation}
	Similarly, choose $t_2=r_{\nu'}+1/M_1M_2\geq r_{\mu'}$ and we have
	$$\mu([0, t_2])\geq\frac{1}{M_1},$$
	$$\nu([0, t_2])\geq\frac{2}{M_2},$$
	then
	\begin{equation}
		\frac{\mu([0, t_2])\nu([0, t_2])}{t_2}\geq\frac{2}{2-M_2+M_2M_1r_\nu}.
	\end{equation}
	Let $u=M_1r_\nu\in[1-1/M_2, 1]$, we have
	$$\max\Big\{\frac{\mu([0, t_1])\nu([0, t_1])}{t_1}, \frac{\mu([0, t_2])\nu([0, t_2])}{t_2}\Big\}\geq\max\Big\{\frac{2}{2-u}, \frac{2}{2-M_2+M_2u}\Big\}\geq\frac{2M_2+2}{M_2+2},$$
	then our result follows from $M_2\geq2$.
\end{proof}

\section{Proof of the main theorem}\label{section:Chen_proof}

Let $A, B$ be infinite almost perfect additive complements. As in Lemma \ref{lm:everything}, choose the constant $L$ and the infinite sequence $\{N_j\}$ such that $A(N_j)B(N_j)-N_j=L$. We translate the information into the language of measure theory. Define
\begin{equation}
	\mu_j:=\frac{1}{A(N_j)}\sum_{\substack{a\in A\\0\leq a\leq N_j}}\delta_{\frac{a}{N_j}}
\end{equation}
\begin{equation}
	\nu_j:=\frac{1}{B(N_j)}\sum_{\substack{b\in B\\0\leq b\leq N_j}}\delta_{\frac{b}{N_j}}
\end{equation}
be the Borel probability measures associated with $A, B$ on $[0, 1]$. Take any continuous function $f:[0, 2]\to\mathbb{R}$. Note that $f$ is bounded. 
\begin{align*}
	\int f\mathrm{d}(\mu_j*\nu_j)
	& = \iint f(x+y)\mathrm{d}\mu_j(x)\mathrm{d}\nu_j(y)\\
	& = \frac{1}{A(N_j)B(N_j)}\sum_{\substack{a\in A\\0\leq a\leq N_j}}\sum_{\substack{b\in B\\0\leq b\leq N_j}}f\Big(\frac{a+b}{N_j}\Big)\\
	& = \frac{1}{A(N_j)B(N_j)}\Big(\sum_{0\leq n\leq n_0}(r(n)-1)f\Big(\frac{n}{N_j}\Big)+\sum_{0\leq n\leq N_j}f\Big(\frac{n}{N_j}\Big)+O_f(\mathcal{C}(N_j))\Big)\\
	& = \frac{N_j}{N_j+L}\Big(\frac{1}{N_j}\sum_{0\leq n\leq N_j}f\Big(\frac{n}{N_j}\Big)\Big)+O(N_j^{-1})\\
	& \to \int_0^1f(t)\mathrm{d}t, j\to\infty.
\end{align*}
Thus we have the weak convergence
\begin{equation}\label{eq:weak_conv}
	\mu_j*\nu_j\to\mathcal{L}_{[0, 1]}.
\end{equation}
Since $[0, 1]$ is compact, by Prokhorov Theorem we can choose the subsequences $\{\mu_{j'}\}, \{\nu_{j'}\}$ of $\{\mu_j\}, \{\nu_j\}$ such that the weak convergence
\begin{equation}\label{eq:mu_weak_conv}
	\mu_{j'}\to\mu
\end{equation}
\begin{equation}\label{eq:nu_weak_conv}
	\nu_{j'}\to\nu
\end{equation}
holds for some Borel probability measures $\mu, \nu$ on $[0, 1]$ with $\mu*\nu=\mathcal{L}_{[0, 1]}$. We will keep using the symbol $\mu_j, \nu_j$ for convenience. 

\subsection{The trivial complementary pair}\

Without loss of generality, suppose $\mu=\delta_0, \nu=\mathcal{L}_{[0, 1]}$. A combinatorial method is available with \eqref{eq:a_j=o(N_j)} the only information from measures.

\begin{theorem}
	Following the former notations. If $\mu=\delta_0, \nu=\mathcal{L}_{[0, 1]}$, then we have
	\begin{equation}
		\limsup_{x\to\infty}\frac{A(x)B(x)}{x}=2.
	\end{equation}
\end{theorem}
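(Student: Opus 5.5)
The plan is to prove the lower bound $\limsup A(x)B(x)/x\geq 2$ by a combinatorial argument along the scales $N_j$. The matching upper bound $\leq 2$ is the upper half of Theorem \ref{thm:chen}, the Chen--Fang bound quoted in the introduction, and I will simply invoke it. The only input I want from the measures is the statement
\begin{equation}\label{eq:a_j=o(N_j)}
a_j:=\max\big(A\cap[0,N_j]\big)=o(N_j),\qquad j\to\infty .
\end{equation}

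\emph{Step 1: proof of \eqref{eq:a_j=o(N_j)}.} Suppose instead that $a_j\geq \varepsilon N_j$ along a subsequence, for some fixed $\varepsilon>0$. Every $b\in B$ with $N_j-a_j<b\leq N_j$ gives a crossing pair $(a_j,b)$ of level $N_j$. The number of such $b$ is
$$B(N_j)\,\nu_j\big((1-\varepsilon,1]\big).$$
Since $\nu_j\to\mathcal{L}_{[0,1]}$, this is at least about $\varepsilon B(N_j)/2$, which tends to infinity because $B$ is infinite. This contradicts $\mathcal{C}(N_j)=O(1)$ from Lemma \ref{lm:everything}(iii).

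\emph{Step 2: the structure of $B$ at scale $N_j$.} Let $a^*_j$ be the least element of $A$ exceeding $N_j$; it exists because $A$ is infinite. Fix $n_0<n<a^*_j$. By Lemma \ref{lm:everything}(i), $n$ has exactly one representation $n=a+b$, and necessarily $a\leq a_j=o(N_j)$. Hence $B\cap[0,a^*_j)$ is a near-tiling of $[0,a^*_j)$ by the translates $a+B$ with $a\in A\cap[0,a_j]$. Consequently $A(N_j)\,B(x)=x+o(x)$ for $x$ up to about $a^*_j$. So $A$ stays confined to $[0,a_j]$ with $A(x)=A(N_j)$ over the long range $[a_j,a^*_j)$, while $B$ grows there with density $1/A(N_j)$.

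\emph{Step 3: the point where the ratio approaches $2$.} At $x=a^*_j$ the factor $A(x)$ jumps to $A(N_j)+1$. I would then test $x$ in a window slightly beyond $a^*_j$. The new translate $a^*_j+(B\cap[0,x-a^*_j])$ must be disjoint from the old sums, by the uniqueness in \eqref{eq:r(n)=1}. This forces the old sums to leave gaps exactly where the new element covers. Counting pairs $(a,b)\in A\times B$ with $a,b\leq x$ via \eqref{eq:A(u)B(v)<u+v} and the partial-sum identity in the proof of Lemma \ref{lm:everything}(i) would express
$$A(x)B(x)=x+\mathcal{C}(x)+O(1).$$
The aim is to find $x$ with $\mathcal{C}(x)\geq (1-o(1))x$. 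My first candidate is $x$ just below the largest element of $B$ inside the tiled block. There, essentially every pair $(a,b)$ with $a\leq a_j$ and $b$ in the top part of the block is crossing, and together with the pairs involving $a^*_j$ this should produce about $x$ crossing pairs.

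\emph{Main obstacle.} Step 3 is where I expect the difficulty. Step 2 by itself only yields $A(x)B(x)\approx x\,(1+1/A(N_j))$, which is useless. The whole gain must come from the crossing pairs $\mathcal{C}(x)$ at a well-chosen $x$ that is not one of the $N_j$. Proving that the tiling of Step 2, continued past $a^*_j$, forces a block with $\mathcal{C}(x)\sim x$ will require a careful use of Lemma \ref{lm:unique_difference} at a later scale $N_{j'}$. In particular, I need to rule out the possibility that $B$ keeps density $1/A(N_j)$ beyond $a^*_j$ without producing many crossings.
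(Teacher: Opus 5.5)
Your Step 1 is exactly the paper's argument, and Step 2 is correct. Quoting Chen--Fang for the upper bound $\leq 2$ is also what the paper does. The gap is Step 3, which carries the whole content of the theorem, and the candidate $x$ you propose there cannot work. For every $x\in[a_j,a^*_j)$ one has $A(x)=A(a_j)$, so \eqref{eq:A(u)B(v)<u+v} gives $A(x)B(x)\leq x+a_j+C_0=(1+o(1))x$ once $x/a_j\to\infty$. Equivalently, $\mathcal{C}(x)\leq a_j+O(1)=o(x)$. At such a level every crossing pair has $a\leq a_j$, hence $b>x-a_j$. Pairs involving $a^*_j$ are not counted at all, since $a^*_j>x$. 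So no $x$ just below the largest element of $B$ in $[0,a^*_j)$, or anywhere else in the bulk of that block, has $\mathcal{C}(x)\sim x$.

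What is missing is the mechanism immediately after $a^*_j$. Put $g_j=a^*_j-1$ and take $x_j$ with $a_j=o(x_j)$ and $x_j=o(N_j)$.

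First, $B\cap(g_j,g_j+x_j]$ has only $O(1)$ elements. This follows from Lemma \ref{lm:unique_difference} at the \emph{same} scale $N_j$, not a later one. Suppose $b$ is such an element and $b-a^*_j\equiv b'-a'\pmod{N_j}$ with $a'\leq a_j$, $b'\leq N_j$. Since $a_j+x_j=o(N_j)$, the congruence is an equality. Then $a^*_j+b'=a'+b$ is a second representation of a large integer, contradicting \eqref{eq:r(n)=1}. So $b-a^*_j$ is one of the $O(1)$ exceptional residues.

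Second, each integer in the window has a unique representation. Those with both summands $\leq g_j$ number at most $\mathcal{C}(g_j)\leq a_j+O(1)$, by the partial-sum identity together with \eqref{eq:A(u)B(v)<u+v}. Those using a new $b$ number $O(A(a_j))=O(a_j)$. Hence $(A(g_j+x_j)-A(g_j))B(x_j)\geq(1-o(1))x_j$. On the other hand, $A(g_j)B(x_j)=A(a_j)B(x_j)\leq(1+o(1))x_j$ by \eqref{eq:A(u)B(v)<u+v}.

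So $A$ roughly doubles across the window while $B$ gains only $O(1)$ elements. Combined with $A(g_j)B(g_j)=(1+o(1))g_j$ and $g_j+x_j=(1+o(1))g_j$, this gives $A(g_j+x_j)B(g_j+x_j)/(g_j+x_j)\geq 2-o(1)$. This argument is exactly what disposes of your worry that $B$ keeps density $1/A(N_j)$ past $a^*_j$. Without it, your proposal does not establish the lower bound.
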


\begin{proof}
	Since $A$ is infinite, the set $A\cap(N_j, \infty)$ is not empty. Let 
	$$a_j=\max(A\cap[0, N_j]),$$
	$$g_j=\min(A\cap(N_j, \infty))-1.$$
	Then we have $A(a_j)=A(N_j)=A(g_j), g_j+1\in A$. 

	Suppose there exists $\varepsilon>0$ such that $a_j>\varepsilon N_j$ for infinite $j$. Then $(a_j, b)$ is a crossing pair for any $b\in B\cap[(1-\varepsilon)N_j, N_j]$. Since $\nu_j\to\mathcal{L}_{[0, 1]}$, the number of such $b$ is $(\varepsilon+o(1))B(N_j)$, then $\mathcal{C}(N_j)\geq (\varepsilon+o(1))B(N_j)\to\infty$, which contradicts \eqref{eq:C(N_j)=O(1)}. Thus we have
	\begin{equation}\label{eq:a_j=o(N_j)}
		a_j=o(N_j).
	\end{equation}

	Recall the definition of crossing pairs, we have
	\begin{equation}
		A(g_j)B(g_j)=g_j+\mathcal{C}(g_j)+O(1),
	\end{equation}
	while \eqref{eq:A(u)B(v)<u+v} gives
	\begin{equation}
		A(g_j)B(g_j)=A(a_j)B(g_j)\leq a_j+g_j+O(1).
	\end{equation}
	Thus $\mathcal{C}(g_j)=O(a_j)$ and $A(g_j)B(g_j)=(1+o(1))g_j$.

	Choose a sequence $x_j\in(a_j, N_j)$ such that $a_j=o(x_j), x_j=o(N_j)$, for example we choose $x_j=[\sqrt{(a_j+1)N_j}]$. We claim and will prove later that
	\begin{equation}\label{eq:A_g}
		A(g_j+x_j)-A(g_j)\geq (1-o(1))A(g_j).
	\end{equation}
	\begin{equation}\label{eq:B_g}
		B(g_j+x_j)-B(g_j)=O(1).
	\end{equation}
	Now we have
	$$A(g_j+x_j)=A(g_j+x_j)-A(g_j)+A(g_j)\geq (2-o(1))A(g_j),$$
	$$B(g_j+x_j)=B(g_j+x_j)-B(g_j)+B(g_j)=(1+o(1))B(g_j),$$
	$$g_j+x_j=(1+o(1))g_j,$$
	$$A(g_j)B(g_j)=(1+o(1))g_j.$$
	Thus
	\begin{equation}\label{eq:nontrivial}
		\frac{A(g_j+x_j)B(g_j+x_j)}{g_j+x_j}\geq2-o(1).
	\end{equation}
	Finally, we only need to prove \eqref{eq:A_g}\eqref{eq:B_g}. 

	To show \eqref{eq:B_g}, for any $b\in B\cap[g_j+1, g_j+x_j]$, suppose there exists a pair $(a', b')\in (A\cap[0, N_j])\times(B\cap[0, N_j])$ such that 
	\begin{equation}\label{eq:congruence}
		b'-a'\equiv b-(g_j+1)\ (\mathrm{mod}\ N_j).
	\end{equation}
	In fact we have $a'\leq a_j=o(N_j)$ and
	$$-a_j\leq b'-a'\leq N_j-1,$$
	$$0\leq b-(g_j+1)\leq x_j-1.$$
	Since $a_j+x_j=o(N_j)$, for large $N_j$ the congruence \eqref{eq:congruence} becomes an equality, and
	\begin{equation}
		(g_j+1)+b'=a'+b.
	\end{equation}
	Thus there exist two different representations of a large integer, which contradicts \eqref{eq:r(n)=1}. Therefore, Lemma \ref{lm:unique_difference} implies that there are at most $O(1)$ such $b$ in $B\cap[g_j+1, g_j+x_j]$. 

	To show \eqref{eq:A_g}, by \eqref{eq:r(n)=1} every integer in $[g_j+1, g_j+x_j]$ has a unique representation for large $g_j$. Then
	\begin{align*}
		x_j
		& = \sum_{\substack{0\leq a, b\leq g_j\\g_j+1\leq a+b\leq g_j+x_j}}1+\sum_{\substack{g_j+1\leq a\leq g_j+x_j\\0\leq b\leq g_j+x_j-a}}1+\sum_{\substack{g_j+1\leq b\leq g_j+x_j\\0\leq a\leq g_j+x_j-b}}1\\
		& \leq \sum_{\substack{0\leq a, b\leq g_j\\g_j+1\leq a+b}}1+\sum_{g_j+1\leq a\leq g_j+x_j}\sum_{0\leq b\leq x_j}1+\sum_{g_j+1\leq b\leq g_j+x_j}\sum_{0\leq a\leq a_j}1\\
		& = \mathcal{C}(g_j)+(A(g_j+x_j)-A(g_j))B(x_j)+(B(g_j+x_j)-B(g_j))A(a_j)\\
		& = o(x_j)+(A(g_j+x_j)-A(g_j))B(x_j)+o(x_j).
	\end{align*}
	Hence
	\begin{equation}\label{eq:AB>x}
		(A(g_j+x_j)-A(g_j))B(x_j)\geq (1-o(1))x_j.
	\end{equation}
	On the other hand, recall that $A(a_j)=A(g_j)$, then from \eqref{eq:A(u)B(v)<u+v} we have
	\begin{equation}\label{eq:AB<x}
		A(g_j)B(x_j)=A(a_j)B(x_j)\leq a_j+x_j+O(1)=(1+o(1))x_j.
	\end{equation}
	Then \eqref{eq:A_g} could be derived from \eqref{eq:AB>x}\eqref{eq:AB<x}.
\end{proof}

\subsection{The non-trivial complementary pair}\

For any common {\it continuous point} $u\in(0, 1)$ (i.e. $\mu(\{u\})=\nu(\{u\})=0$) of the distribution functions of $\mu,\nu$, 
$$\frac{A(uN_j)}{A(N_j)}=\mu_j([0, u])\to\mu([0, u]),\ \frac{B(uN_j)}{B(N_j)}=\nu_j([0, u])\to\nu([0, u]).$$
Recall that $A(N_j)B(N_j)/N_j\to1$, then
\begin{align*}
	\frac{A(uN_j)B(uN_j)}{uN_j}\to\frac{\mu([0, u])\nu([0, u])}{u}.
\end{align*}

Suppose $(\mu, \nu)$ is a non-trivial complementary pair with respect to $\mathcal{L}_{[0, 1]}$. Note that the discontinuous points of any measure are at most countable. Take $t=t_1, t_2$ as in Lemma \ref{lm:GL}, then let $t+\varepsilon\to t$ along a sequence of common continuous points, we have
\begin{equation}\label{eq:trivial}
	\limsup_{x\to\infty}\frac{A(x)B(x)}{x}\geq\max\Big\{\frac{\mu([0, t_1])\nu([0, t_1])}{t_1}, \frac{\mu([0, t_2])\nu([0, t_2])}{t_2}\Big\}\geq \frac{3}{2}.
\end{equation}

Now the proof of Theorem \ref{thm:chen} is done by combining \eqref{eq:nontrivial}\eqref{eq:trivial}. 

\section{Perfect additive complements}\label{section:eg}

Recall that $A, B$ are called perfect additive complements, if $A, B$ are additive complements with $r(n)=1$ for all $n\in\mathbb{Z}_{\geq0}$. We will give an example of perfect additive complements. The mixed-radix construction was investigated by Ma \cite{Ma2022} and developed by Fang and S\'andor \cite{FS2025}. We independently discovered and studied the example.

\begin{definition}
	For the integer sequence $\{d_k\}_{k\geq1}$ with $d_k\geq2$ for all $k$, define
	\begin{align*}
		A
		& = \Big\{\sum_{j\geq0}\varepsilon_{2j+1}\prod_{i=1}^{2j}d_i\mid \varepsilon_{2j+1}\in\{0, 1, \ldots, d_{2j+1}-1\}, \varepsilon_{2j+1}=0\ \text{for all but finitely many}\ j\Big\}\\
		& = \Big\{\varepsilon_1+\varepsilon_3d_1d_2+\varepsilon_5d_1d_2d_3d_4+\ldots\Big\}.
	\end{align*}
	\begin{align*}
		B
		& = \Big\{\sum_{j\geq0}\varepsilon_{2j+2}\prod_{i=1}^{2j+1}d_i\mid \varepsilon_{2j+2}\in\{0, 1, \ldots, d_{2j+2}-1\}, \varepsilon_{2j+2}=0\ \text{for all but finitely many}\ j\Big\}\\
		& = \Big\{\varepsilon_2d_1+\varepsilon_4d_1d_2d_3+\varepsilon_6d_1d_2d_3d_4d_5+\ldots\Big\}.
	\end{align*}
	Then $A, B$ are additive complements. We call $A, B$ the additive complements associated with the sequence $\{d_k\}$. 
	
	Particularly, $A, B$ are almost perfect additive complements, since we have
	$$A(C_k)B(C_k)-C_k=1$$
	by taking $C_k=d_1d_2\cdots d_{2k}-1$. 
\end{definition}

\begin{theorem}[Ma, 2022]\label{thm:Ma}
	Given an integer sequence $\{d_k\}_{k\geq1}$ with $d_k\geq2$ for all $k$, the associated additive complements $A, B$ are infinite perfect additive complements. Furthermore, let
	$$D_k=\sum_{j=0}^{k-1}(-1)^j\Big(\prod_{i=0}^jd_{k-i}\Big)^{-1}=\frac{1}{d_k}-\frac{1}{d_kd_{k-1}}+\cdots+\frac{(-1)^{k-1}}{d_kd_{k-1}\cdots d_1},$$
	then we have
	\begin{equation}\label{eq:limsup_calc}
		\limsup_{x\to\infty}\frac{A(x)B(x)}{x}=\limsup_{k\to\infty}\frac{2}{1+D_k}.
	\end{equation}
	Particularly, 
	$$\limsup_{x\to\infty}\frac{A(x)B(x)}{x}=2\Longleftrightarrow\limsup d_k=\infty.$$
\end{theorem}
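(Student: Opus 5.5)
The plan has three parts: uniqueness of representations from the mixed-radix system, a reduction of the $\limsup$ to a recursion along explicit extremal points, and the final equivalence about $\limsup d_k$. Write $P_0=1$ and $P_k=d_1d_2\cdots d_k$.

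\emph{Perfectness.} Every $n\in\mathbb{Z}_{\geq0}$ has a unique mixed-radix expansion $n=\sum_{k\geq1}\varepsilon_kP_{k-1}$ with $0\leq\varepsilon_k\leq d_k-1$. This follows by induction on $k$, since these expansions biject $[0,P_k)$ with the digit strings $(\varepsilon_1,\dots,\varepsilon_k)$. Splitting the expansion into odd-indexed and even-indexed digits gives exactly one way to write $n=a+b$ with $a\in A$, $b\in B$. Conversely, any representation $n=a+b$ produces such an expansion, because the digits of $a$ and $b$ occupy disjoint positions and so no carries occur. Hence $r(n)=1$ for all $n$. Both sets are infinite because each $d_k\geq2$.

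\emph{The $\limsup$.} I would use two counting facts. First, $A(P_k-1)B(P_k-1)=P_k$. Second, the self-similarity: for $x=\varepsilon P_{k-1}+y$ with $0\leq y<P_{k-1}$ and $k$ odd, we have $B(x)=B(P_{k-1}-1)$ and $A(x)=\varepsilon A(P_{k-1}-1)+A(y)$; symmetrically for $k$ even. Since $A(x)B(x)$ only jumps at elements of $A\cup B$ while $x$ grows, the local maxima of $A(x)B(x)/x$ occur at $x\in A\cup B$.

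Next I would identify, for each $k$, the point of $[P_{k-1},P_k)$ maximizing the ratio. The guess is that it is built greedily. The top digit should be $\varepsilon_k=1$, since larger top digits make the ratio tend towards $(\varepsilon+1)/\varepsilon$. Below it one places the extremal point of the previous levels, which produces alternating digit choices $d_{k-1}-1,0,\dots$ or a similar pattern. Writing $x$ in this form, the self-similar formulas give $A(x)B(x)$ and $x$ as explicit expressions in $P_{k-1},P_{k-2},\dots$. Dividing by $P_k$ should telescope into the alternating sum $D_k$ and yield the value $2/(1+D_k)$ up to $o(1)$.

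The main obstacle is the upper bound: showing that no other $x$ in the block does better than this candidate. I would prove it by induction on $k$, reducing $\sup_{x<P_k}$ to $\sup_{y<P_{k-1}}$ via the self-similar formula. The key point is that the ratio, as a function of $y$, is a M\"obius-type expression in $A(y)$ and $y$, so that maximizing it amounts to maximizing the corresponding ratio at level $k-1$. This gives the recursion $D_k=\frac{1}{d_k}(1-D_{k-1})$, which matches the stated formula for $D_k$. The lower bound then follows simply by evaluating the ratio at the explicit extremal points.

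\emph{Equivalence with $\limsup d_k=\infty$.} Since $0\leq D_{k-1}\leq1/d_{k-1}\leq1/2$, we get $\frac{1}{2d_k}\leq D_k\leq\frac1{d_k}$. Therefore $\limsup_k 2/(1+D_k)=2$ holds if and only if $\liminf_k D_k=0$, which holds if and only if $\limsup_k d_k=\infty$.
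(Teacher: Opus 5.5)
The paper quotes this theorem from Ma without giving a proof, so I am judging your outline on its own terms. Several parts are correct: perfectness, $A(P_k-1)B(P_k-1)=P_k$, the self-similar counting formulas, the reduction to $x\in A\cup B$, and the final equivalence via $\frac{1}{2d_k}\le D_k\le\frac1{d_k}$.

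Your description of the extremal point is off, though. For $k$ odd, the best point of $[P_{k-1},P_k)$ is $x_k=P_{k-1}+y^*$ with $y^*=\max(A\cap[0,P_{k-1}))$. Below the leading $1$ its digits are $0,d_{k-2}-1,0,d_{k-4}-1,\dots$: the $B$-positions are empty and the $A$-positions are full. It is not built from the extremal point of the previous block, which lies in the other set. A nonzero digit in the $B$-position $k-1$ gives the same product $2P_{k-1}$ at a larger $x$. At $x_k$ one has exactly $A(x_k)B(x_k)=2P_{k-1}$ and $y^*/P_{k-1}=D_{k-1}+O(1/P_{k-1})$. This gives the lower bound, with $D_{k-1}$ rather than $D_k$ for this block, which is harmless.

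The real gap is the upper bound. Write $a=A(P_{k-1}-1)$ and $b=B(P_{k-1}-1)$. After fixing the leading digit to $1$, you must maximize $\frac{P_{k-1}+bA(y)}{P_{k-1}+y}$ over $y<P_{k-1}$.
\begin{itemize}
\item This expression involves $A(y)$ alone, so it is not the level-$(k-1)$ ratio $A(y)B(y)/y$. Its maximizer $y^*<P_{k-2}$ does not even lie in the block $[P_{k-2},P_{k-1})$, so the two problems are genuinely different.
\item The maximizer of a linear-fractional expression over the points $(y,A(y))$ depends on its coefficients. Your claim that the problem ``amounts to maximizing the corresponding ratio at level $k-1$'' is therefore exactly what needs proof, and under the natural reading it is false.
\item The recursion $D_k=\frac{1}{d_k}(1-D_{k-1})$ does not come from any optimization. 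It is an identity of alternating sums: the largest elements of $A$ and of $B$ below $P_{k-1}$ have complementary digits and sum to $P_{k-1}-1$.
\end{itemize}

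The missing ingredient is a lemma of the following kind. For $m$ odd, let $S=A\cap[0,P_m)$ and $\rho=|S|/P_m$; then the function $y\mapsto\#(S\cap[0,y])-\rho y$ on $[0,P_m)$ is maximal at $y=\max S$. The same holds for $B$ with $m$ even. You can prove it by induction on $m$:
\begin{itemize}
\item $S$ is the union of the translates $\theta P_{m-1}+S'$, $0\le\theta<d_m$, where $S'=A\cap[0,P_{m-2})$.
\item Since $\rho P_{m-1}=|S'|$, the function is $P_{m-1}$-periodic.
\item Since $\rho=\rho'/d_{m-1}\le\rho'=|S'|/P_{m-2}$, the induction hypothesis for $S'$ places the maximum on each period at $\max S'$.
\end{itemize}
Applying this with $m=k-2$ gives $bA(y)\le P_{k-1}-d_{k-1}(y^*-y)$ for $y\le y^*$. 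Because $d_{k-1}\ge 2\ge\frac{2P_{k-1}}{P_{k-1}+y^*}$, it follows that $\frac{P_{k-1}+bA(y)}{P_{k-1}+y}\le\frac{2P_{k-1}}{P_{k-1}+y^*}$. For $y\ge y^*$ this is trivial, and leading digits $\varepsilon\ge2$ only push the ratio toward $1$. Hence the maximum over the block is attained exactly at $x_k$.
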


\begin{corollary}[Chen's constant 3/2 is sharp]
	The additive complements $A, B$ associated with the sequence $\{2,2,2,2,\ldots\}$ satisfy
	$$\limsup_{x\to\infty}\frac{A(x)B(x)}{x}=\frac{3}{2}.$$
\end{corollary}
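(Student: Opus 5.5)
We specialize Theorem \ref{thm:Ma} to the constant sequence $d_k=2$ and evaluate the right-hand side of \eqref{eq:limsup_calc}. With every $d_i=2$ the quantity $D_k$ is a finite geometric sum:
$$D_k=\sum_{j=0}^{k-1}(-1)^j2^{-(j+1)}=\frac{1}{2}\cdot\frac{1-(-1/2)^k}{1+1/2}=\frac{1}{3}\Big(1-\Big(-\frac12\Big)^k\Big).$$
Hence $D_k\to 1/3$, and it oscillates around this limit: $D_k>1/3$ for odd $k$ and $D_k<1/3$ for even $k$.

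Since $x\mapsto 2/(1+x)$ is continuous and decreasing, the sequence $2/(1+D_k)$ converges to $2/(1+1/3)=3/2$. Its $\limsup$ is therefore $3/2$, and \eqref{eq:limsup_calc} gives the claim. For completeness we would record that the largest values occur along even $k$, where $2/(1+D_k)$ exceeds $3/2$ and decreases to it. This fits the choice $C_k=2^{2k}-1$ in the definition, but it is not needed for the $\limsup$.

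There is no real obstacle once Theorem \ref{thm:Ma} is assumed; the only care needed is the sign and indexing convention of $D_k$. As a sanity check independent of Ma's formula, we would look at the binary case directly. Here $A$ is the set of integers with binary digits only in even positions and $B$ those with digits only in odd positions. At $x=2^{2k}-1$ we get $A(x)B(x)=2^k\cdot 2^k=x+1$, and at $x$ of the form $(101\ldots01)_2$ the ratio approaches $3/2$.

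Finally, sharpness of the lower bound in Theorem \ref{thm:chen} follows because these $A,B$ are infinite perfect, hence almost perfect, additive complements.
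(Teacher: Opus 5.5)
Your proof is correct and follows the paper's route: the corollary is Ma's formula \eqref{eq:limsup_calc} specialized to $d_k=2$, where $D_k=\frac13\bigl(1-(-\frac12)^k\bigr)\to\frac13$ gives $\limsup_k 2/(1+D_k)=\frac32$ (equivalently, the case $G=2$ of Theorem \ref{thm:perfect}, whose interval collapses to $\{3/2\}$). Two asides are off, though neither affects the argument: at $C_k=2^{2k}-1$ the ratio is $2^{2k}/(2^{2k}-1)\to1$, so these are not where the values above $3/2$ occur (those occur e.g.\ at $x=(4^k-1)/3$); and almost perfectness is better justified by your direct computation $A(C_k)B(C_k)-C_k=1$ than by ``perfect, hence almost perfect'', which the paper notes is nontrivial in general.
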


\begin{theorem}[Fang--S\'andor, 2025]
	Suppose that $A, B$ are infinite perfect additive complements, then there exists an integer sequence $\{d_k\}, d_k\geq2$ such that $A, B$ are additive complements associated with $\{d_k\}$, or $A, B$ interchanged.
\end{theorem}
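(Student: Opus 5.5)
The statement to prove is the Fang--S\'andor structure theorem: every pair of infinite perfect additive complements is, up to swapping $A$ and $B$, the mixed-radix pair associated with some sequence $\{d_k\}$, $d_k\geq2$. I would argue by induction, determining the ``digits'' $d_1,d_2,\ldots$ one at a time from the perfect tiling $A\oplus B=\mathbb{Z}_{\geq0}$, where every $n\geq0$ has exactly one representation $n=a+b$.

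\emph{Base step.} Since $0=a+b$ forces $0\in A\cap B$, and $1$ is represented uniquely, exactly one of $A,B$ contains $1$. Swapping if necessary, assume $1\in A$. Let $d_1\geq2$ be the least positive integer not in $A$. Then $0,1,\ldots,d_1-1\in A$, and $B\cap[1,d_1-1]=\emptyset$, since otherwise some $b<d_1$ would give two representations of $b$. The integer $d_1$ itself lies neither in $A$ nor in $B\cap[1,d_1-1]$, so its representation must be $d_1=0+d_1$ with $d_1\in B$.

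\emph{Inductive step.} The key lemma I would prove is the following. Suppose $P=\{0,\ldots,d_1-1\}$ is an initial block of $A$, meaning $A\cap[0,d_1)=P$ and $A\cap[d_1,\infty)$ is nonempty. If moreover $P\oplus B$ tiles every initial segment it reaches, then every element of $A$ and every element of $B$ is divisible by $d_1$.

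To prove it, I use the uniqueness of representations to show that $A+B$ decomposes into translates $x+P$ with $x\in d_1\mathbb{Z}$. Concretely, take $n$ minimal such that some element of $A\cup B$ is not $\equiv0\pmod{d_1}$, and derive a collision between two representations of a small integer. Once divisibility holds, write $A=P\oplus d_1A'$. Then $A'\oplus B/d_1=\mathbb{Z}_{\geq0}$ is again a perfect tiling, now with the roles of the two sets exchanged: $B/d_1$ contains $1$, because $d_1\in B$.

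Iterating the argument produces the sequence $d_2,d_3,\ldots$. Unwinding the recursion gives exactly the digit description in the Definition. Infiniteness of both $A$ and $B$ guarantees that the process never terminates, so every $d_k$ is finite.

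I expect the divisibility lemma to be the main obstacle. The periodicity has to come from uniqueness alone, so the proof must rule out elements of $B$ that are not multiples of $d_1$. I would try first the following argument. Let $b\in B$ be the least element not divisible by $d_1$, and write $b=qd_1+s$ with $0<s<d_1$. By minimality, the integers below $b$ are covered by the translates $P+b'$ for the smaller elements $b'\in B$, which are multiples of $d_1$, together with the larger blocks of $A$. But $qd_1+s$ is already covered by $P+qd_1$ whenever $qd_1\in B$. If instead $qd_1\notin B$, then the number $qd_1$ must be represented using an element of $A$ at least $d_1$. In that case I compare the representations of $b$ and $b-s$ to force a repeated sum.

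A symmetric argument handles the elements of $A$. This is essentially the Krasner--Ranulac / de Bruijn argument, so I would structure the proof after de Bruijn's proof of the analogous result for tilings of $\mathbb{Z}_{\geq0}$.
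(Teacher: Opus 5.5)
The paper gives no proof of this statement; it only quotes it from Fang--S\'andor. The benchmark is therefore the classical de Bruijn argument. Your architecture matches it: peel off the block $P=\{0,\dots,d_1-1\}$, show $B=d_1B'$ and $A=P\oplus d_1A'$, rescale, swap roles, iterate.

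The gap is the key lemma, which is misstated and not actually proved. As written it says every element of $A$ is divisible by $d_1$, which is false: $1\in A$, and for $d_k\equiv 2$ also $5\in A$. The correct statement is that $B\subseteq d_1\mathbb{Z}$ and $m\in A\iff d_1\lfloor m/d_1\rfloor\in A$ for all $m$. Its only hypotheses are $A\oplus B=\mathbb{Z}_{\geq0}$, $P\subseteq A$ and $d_1\notin A$; your condition that ``$P\oplus B$ tiles every initial segment it reaches'' is unclear and not needed. There is also no symmetry to exploit. The property of $A$ is a block property, not divisibility. Your minimal-$b$ sketch already uses it below $b$ (``together with the larger blocks of $A$''), and proving it in turn needs divisibility in $B$. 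So the two halves must be proved together, by one strong induction on $n$.

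Inside that induction your sketch misses the decisive case. The cases $n<d_1$ and $d_1\mid n$ are immediate. Otherwise let $n=qd_1+s$ with $q\geq1$, $0<s<d_1$, and let $qd_1=a_0+b_0$ be the unique representation. Induction gives $d_1\mid b_0$, hence $d_1\mid a_0$.

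\emph{Case $b_0>0$.} Then $a_0+s=n-b_0<n$ lies in $A$ by the block property, so $n=(a_0+s)+b_0$ is the representation of $n$. This forces $n\notin A\cup B$ and $qd_1\notin A$; it is your comparison of $b$ with $b-s$.

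\emph{Case $b_0=0$, i.e.\ $qd_1\in A$.} Now the shifted pair is $(n,0)$, and $n\in A$ is exactly what is at stake, so that comparison gives nothing. What works is two steps.
First, suppose $n=a+b$ with $0<b<n$. Induction gives $d_1\mid b$ and $a-s\in A$, so $qd_1=(a-s)+b$ would be a second representation of $qd_1$. Hence $b\in\{0,n\}$.
Second, the option $b=n$, i.e.\ $n\in B$, is excluded by the collision $qd_1+d_1=(d_1-s)+n$. These are two distinct representations of $(q+1)d_1$, using $d_1\in B$ and $d_1-s\in P$. This step is the missing idea, and it is where $d_1\in B$ from your base step is really used.
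So $n\in A$ and $n\notin B$.

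With the lemma proved in this form, the rest goes through as you describe: the rescaled tiling $A'\oplus B/d_1=\mathbb{Z}_{\geq0}$ with $1\in B/d_1$, the role swap, and the unwinding, which needs $d_1\cdots d_k\to\infty$.
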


B\'ar\'any, Fang and S{\'a}ndor \cite{BFS2023} investigated the {\it spectrum} (all possible $\limsup_xA(x)B(x)/x$) of perfect additive complements. We can give a short proof of a special case.

\begin{theorem}\label{thm:perfect}
	For the integer sequence $\{d_k\}_{k\geq1}$ with $d_k\geq2$ for all $k$, let $A, B$ be the associated additive complements. If $\limsup d_k=G<\infty$, then
	$$\limsup_{x\to\infty}\frac{A(x)B(x)}{x}\in\Big[2-\frac{2}{G+2}, 2-\frac{1}{G}\Big],$$
	while the left (resp. right) endpoint corresponds to the perfect additive complements associated with $\{G, G, G, G,\ldots\}$ (resp. $\{G, 2, G, 2, \ldots\}$).
	Particularly, infinite perfect additive complements $A, B$ satisfy
	$$\limsup_{x\to\infty}\frac{A(x)B(x)}{x}\in\Big\{\frac{3}{2}\Big\}\cup\Big[\frac{8}{5}, 2\Big].$$
\end{theorem}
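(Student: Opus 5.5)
The plan is to work directly with Ma's formula \eqref{eq:limsup_calc}. It reduces everything to the quantity $\liminf_{k\to\infty} D_k$, because
$$\limsup_{x\to\infty}\frac{A(x)B(x)}{x}=\frac{2}{1+\liminf_k D_k}.$$
The alternating sum $D_k$ obeys the recursion
$$D_k=\frac{1}{d_k}\bigl(1-D_{k-1}\bigr),\qquad D_0=0.$$
This makes $D_k$ a backward continued-fraction-like quantity, and the whole proof becomes a study of the map $T_d(y)=(1-y)/d$. These maps are decreasing and contract by the factor $1/d\leq 1/2$, so the initial condition is forgotten geometrically. Only the digits $d_k$ with $k$ large matter, and for large $k$ we may assume $2\leq d_k\leq G$.

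\textbf{Upper bound on $\liminf D_k$ (gives the left endpoint).} Since $G=\limsup d_k$, we have $d_k=G$ for infinitely many $k$. For such $k$,
$$D_k=\frac{1-D_{k-1}}{G}\leq \frac{1-D_{k-1}}{G}.$$
This only helps if $D_{k-1}$ is bounded below. Since every $d_i\leq G$ eventually, the recursion gives
$$D_{k-1}\geq \frac{1-D_{k-2}}{G}\quad\text{and}\quad D_{k-2}\leq \frac{1}{2}.$$
These show $D$ stays in a compact interval. To pin down the extremes, I will track $m=\liminf D_k$ and $M=\limsup D_k$. Because the maps are decreasing, taking limits along subsequences yields
$$m\geq \frac{1-M}{G},\qquad M\leq \frac{1-m}{2}.$$
Solving these gives
$$m\geq \frac{1-(1-m)/2}{G}\;\Longrightarrow\; m\geq \frac{1}{2G-1},$$
which is the bound for the right endpoint: $2/(1+m)\leq 2-1/G$. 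For the left endpoint I instead bound $m$ above. Choose $k$ with $d_k=G$. If $D_{k-1}\geq \frac{1}{G+1}$ (the fixed point of $T_G$), then $D_k\leq \frac{1}{G+1}$. Otherwise $D_{k-1}<\frac{1}{G+1}$ directly. Either way some $D$ near index $k$ is at most $\frac{1}{G+1}$, so
$$m\leq \frac{1}{G+1}\quad\text{and}\quad \limsup\geq \frac{2}{1+1/(G+1)}=2-\frac{2}{G+2}.$$

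\textbf{Endpoints attained.} For the constant sequence $d_k\equiv G$, $D_k$ converges to the fixed point $\frac{1}{G+1}$, giving $2-\frac{2}{G+2}$. For the sequence $G,2,G,2,\ldots$, the two-step map has fixed points solving
$$y=\frac{1-(1-y)/2}{G},$$
so $y=\frac{1}{2G-1}$ for the $G$-steps and $\frac{G-1}{2G-1}$ for the other steps. Hence $\liminf D_k=\frac{1}{2G-1}$ and the value is $2-1/G$.

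\textbf{Gap.} If $G=\infty$, the limsup is $2$ by Theorem \ref{thm:Ma}. If $G=2$, the interval is $[3/2,3/2]$. If $G\geq 3$, the lower endpoint $2-2/(G+2)$ is at least $8/5$. Since every infinite perfect pair arises this way by Fang--S\'andor, this proves the final claim.

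\textbf{Main obstacle.} The delicate step is the inequality $M\leq (1-m)/2$, together with the sign bookkeeping for the decreasing maps. The bound $M\leq\frac{1-m}{d}\leq\frac{1-m}{2}$ needs $D_{k-1}$ near $m$ along a subsequence where $D_k$ is near $M$. I will handle this with $\liminf$/$\limsup$ inequalities directly, $D_k\leq \frac{1-D_{k-1}}{2}$, taking $\limsup$ over $k$. This yields the weaker inequality $M\leq\frac{1-m}{2}$, and that suffices. The analogous step gives $m\geq\frac{1-M}{G}$.
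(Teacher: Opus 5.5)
Your proposal is correct and follows essentially the paper's argument: the bound $\liminf_k D_k\le \frac{1}{G+1}$ comes from the same dichotomy on $D_{k-1}$ at indices with $d_k=G$, and your pair of inequalities $m\ge (1-M)/G$, $M\le (1-m)/2$ is the paper's two-step estimate $D_k\ge \frac{1+D_{k-2}}{2G}$ split through $M=\limsup_k D_k$. You also verify the two endpoint examples explicitly via their fixed points, a check the paper leaves to the reader.
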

\begin{proof}
	Following the notations in Theorem \ref{thm:Ma}. We only need to show that
	\begin{equation}
		\liminf_{k\to\infty}D_k\in\Big[\frac{1}{2G-1}, \frac{1}{G+1}\Big].
	\end{equation}
	Let $\liminf_kD_k=s$. Notice that $0<D_k\leq 1/2, 2\leq d_k\leq G$ for large $k$, and
	$$D_k=\frac{1}{d_k}(1-D_{k-1}), k\geq2.$$
	Choose a subsequence $\{d_{k_j}\}$ such that $d_{k_j}=G$ for all $j$. Then
	$$s\leq\liminf_j\min\{D_{k_j}, D_{k_j-1}\}=\liminf_j\min\Big\{\frac{1}{G}(1-D_{k_j-1}), D_{k_j-1}\Big\}\leq \frac{1}{G+1}.$$
	On the other hand, we have
	$$D_k=\frac{1}{d_k}\Big(1-\frac{1-D_{k-2}}{d_{k-1}}\Big)\geq\frac{1}{G}\Big(1-\frac{1-D_{k-2}}{2}\Big)=\frac{1+D_{k-2}}{2G}.$$
	Taking the lower limit, we have $s\geq(1+s)/(2G)$. Hence
	\begin{equation}
		s\geq\frac{1}{2G-1}.
	\end{equation}
	A construction could be derived from the proof, and easily verified by \eqref{eq:limsup_calc}.
\end{proof}

\section{Remarks}

\subsection{The terminology}\

It is true, but not trivial that all {\it perfect} additive complements are {\it almost perfect} additive complements. The claim can be derived from the argument in Section \ref{section:eg}. 

There exist non-trivial infinite {\it almost perfect} additive complements which are {\it not perfect}. Let $m$ be an integer, and $C, D\subseteq\{0, 1, \ldots, m-1\}$ with $0\in C\cap D$ and the map 
\begin{align*}
	C\times D &\to \mathbb{Z}/m\mathbb{Z}\\
	(c, d)&\mapsto c+d\ \mathrm{mod}\ m
\end{align*}
is a bijection. Then for any infinite almost perfect additive complements $E, F$, the sets 
$$A=C+mE,\ B=D+mF$$
are also almost perfect. For example, consider $m=4, C=\{0, 2\}, D=\{0, 3\}$, then $1\not\in A+B$ and $A, B$ are not perfect. We can start the operation with $E, F$ any infinite perfect additive complements, and the examples satisfy $r(n)\leq 1$ for all $n$. We have not found the example of almost perfect additive complements with $r(n)\geq2$ for some $n$.

Note that {\it almost perfect} implies {\it eventually unique}, but the reverse remains open. As in Lemma \ref{lm:everything}, 
$$A(x)B(x)-x\not\to\infty\implies r(n)=1\ \text{for sufficiently large}\ n.$$
We are not sure whether the two conditions are equivalent. It is true for the case of infinite perfect additive complements; If one of $A, B$ is {\it finite}, the equivalence is trivial. However, we have not found a proof for {\it infinite} sets. 
\begin{question}\label{ques:equiv?}
	For infinite additive complements $A, B$ with $r(n)=1$ for all large $n$, is it true that $\liminf_n(A(n)B(n)-n)<\infty$?
\end{question}

\subsection{The main theorem}\

\eqref{eq:weak_conv} is a bridge from the combinatorial problem to the analysis problem. It relies on the relation
\begin{equation}\label{eq:r(n)=1_notes}
	r(n)=1, n>n_0;
\end{equation}
\begin{equation}\label{eq:C(N_j)=O(1)_notes}
	\mathcal{C}(N_j)=O(1),\ \text{for some subsequence}\ \{N_j\}.
\end{equation}
Recall that for additive complements $A, B$,
$$A(N)B(N)-N=\sum_{0\leq n\leq N}(r(n)-1)+\mathcal{C}(N)+1.$$
Hence $A(x)B(x)-x\not\to\infty$ is equivalent to \eqref{eq:r(n)=1_notes}\eqref{eq:C(N_j)=O(1)_notes} taken together. It is easy to find a counterexample to show that \eqref{eq:C(N_j)=O(1)_notes} does not imply \eqref{eq:r(n)=1_notes}. It remains open whether \eqref{eq:r(n)=1_notes} alone implies \eqref{eq:C(N_j)=O(1)_notes}, as shown in Question \ref{ques:equiv?}.

A strong result is obtainable with the full power of Gabardo--Lai theorem and a similar argument in Section \ref{section:Chen_proof}. We skip the details.

\begin{theorem}
	Fix the complementary pair $(\mu, \nu)$ given in \eqref{eq:mu_weak_conv}\eqref{eq:nu_weak_conv} and let $\{M_k\}$ be the integer sequence associated with the complementary pair \cite[Thm. 2.2]{GL2014}. 

	$(i)$ In the Type I (finite) case,
	$$\limsup_{x\to\infty}\frac{A(x)B(x)}{x}=2.$$

	$(ii)$ In the Type II (infinite) case,
	$$\limsup_{x\to\infty}\frac{A(x)B(x)}{x}\geq\frac{2M_k+2}{M_k+2}, k\geq2.$$
	Particularly, the upper limit is $\geq8/5$ if $M_k\geq3$ for some $k\geq2$, and equals $2$ for unbounded $\{M_k\}$.
\end{theorem}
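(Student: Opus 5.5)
The argument follows Section \ref{section:Chen_proof}, using the full decomposition of Gabardo and Lai \cite[Thm. 2.2]{GL2014} instead of only its first two factors. That theorem writes a complementary pair as an iterated convolution of normalized counting measures on arithmetic progressions, with scales $M_1, M_2, \ldots$ alternating between $\mu$ and $\nu$. In the Type I case the sequence $\{M_k\}$ is finite and ends with a trivial pair $(\delta_0, \mathcal{L})$ or $(\mathcal{L}, \delta_0)$ on a small interval $[0, (M_1\cdots M_K)^{-1}]$. In the Type II case the sequence is infinite. I treat the two cases separately.

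\emph{Type II.} Fix $k\geq2$ and view the pair at level $k$. Set $P=M_1\cdots M_{k-1}$. After rescaling by $P$, the factor that remains after the first $k-1$ steps is again a complementary pair of $\mathcal{L}_{[0,1]}$, and this rescaled pair has first two factors $M_k, M_{k+1}$. I then run the proof of Lemma \ref{lm:GL} on this inner pair, with its first two factors $M_k, M_{k+1}$ (note that the bound $\frac{2M_2+2}{M_2+2}$ there depends on the second factor). The role of the outer pair has to be tracked. For $t$ in $[0, 1/P]$, the full masses are
$$\mu([0,t])=\mu_{\text{outer}}(\{0\})\cdot\mu_{\text{inner}}([0,t]),\qquad \nu([0,t])=\nu_{\text{outer}}(\{0\})\cdot\nu_{\text{inner}}([0,t]),$$
where the outer parts are the products of the first $k-1$ progression measures. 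Their atoms at $0$ have masses multiplying to $1/P$, and this exactly cancels the rescaling of $t$ by $1/P$. Hence $\mu([0,t])\nu([0,t])/t$ at $t=s/P$ equals the inner ratio at $s$. Here I need that the supports of the inner pieces lie in $[0,1/P]$ and that the progression translates start at multiples of a scale at least $1/P$, so only the atom at $0$ contributes. With indices shifted, Lemma \ref{lm:GL} then gives a point $t$ where the ratio is at least $\frac{2M_k+2}{M_k+2}$. Passing from $t$ to nearby common continuity points exactly as in \eqref{eq:trivial} yields the lower bound for $\limsup A(x)B(x)/x$. If $M_k\geq3$ for some $k\geq2$, this gives $8/5$. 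If $\{M_k\}$ is unbounded, the bound tends to $2$, and the upper bound $2$ comes from Theorem \ref{thm:chen}.

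\emph{Type I.} Now the innermost factor is trivial, say $\mu$ has an innermost factor $\delta_0$ and $\nu$ has an innermost factor $\mathcal{L}_{[0,1/Q]}$, where $Q=M_1\cdots M_K$. I would redo the proof of the trivial-case theorem at scale $N_j/Q$, in place of $N_j$. The measure information gives the analogue of \eqref{eq:a_j=o(N_j)}: $A\cap[0,N_j/Q]$ sits in $[0,o(N_j)]$, by the same crossing-pair count using the equidistribution of $\nu_j$ on $[0,1/Q]$. Let $g_j$ be the largest integer with $A(g_j)=A(N_j/Q)$ (more precisely, $A$ has no element in $(a_j,g_j]$). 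Then the estimates \eqref{eq:A_g} and \eqref{eq:B_g} go through verbatim, using Lemma \ref{lm:unique_difference}, \eqref{eq:r(n)=1} and \eqref{eq:A(u)B(v)<u+v}, none of which depend on the scale. This gives ratio at least $2-o(1)$. The case with $\mu$ and $\nu$ interchanged is symmetric.

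\emph{Main obstacle.} The hardest step is the Type I rescaling. The congruence trick behind \eqref{eq:B_g} uses Lemma \ref{lm:unique_difference} modulo $N_j$, but at the inner scale I need a structure of uniqueness modulo something of size about $N_j/Q$, or else the interval $[g_j+1,g_j+x_j]$ must still satisfy $a_j+x_j=o(N_j)$. My first attempt is to keep the modulus $N_j$ and only ensure that $a_j$ and $x_j$ are $o(N_j)$. This works, since $a_j=o(N_j)$ is all that the congruence argument used. I also need to check that $A(g_j)B(g_j)=(1+o(1))g_j$ still follows from \eqref{eq:A(u)B(v)<u+v} together with the crossing-pair count at $g_j$.
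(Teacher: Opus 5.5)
Your Type II argument is sound. It is presumably what the paper means by using ``the full power'' of Gabardo--Lai, though the paper skips the details. The rescaling identity holds because the points produced by Lemma \ref{lm:GL} satisfy $t<1/P$, so only the atoms at $0$ of the outer factors contribute. There is one index slip. If the inner pair has first factors $M_k,M_{k+1}$, the lemma gives $\frac{2M_{k+1}+2}{M_{k+1}+2}$, so you should take $P=M_1\cdots M_{k-2}$; the case $k=2$ is Lemma \ref{lm:GL} itself.

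\emph{The gap (Type I, $K\geq1$).} Neither combinatorial step survives the move to scale $N_j/Q$.
\begin{itemize}
\item For \eqref{eq:B_g}, the congruence trick used more than $a_j=o(N_j)$. It needed all of $A\cap[0,N_j)$ to lie in $[0,a_j]$, and it needed $g_j+1>N_j$. Here $g_j+1\approx N_j/Q<N_j$. So for every $b\in B\cap[g_j+1,g_j+x_j]$, the pair $(a',b')=(g_j+1,b)$ lies in $(A\cap[0,N_j))\times(B\cap[0,N_j))$ and satisfies the congruence with equality. Lemma \ref{lm:unique_difference} then yields nothing. Without \eqref{eq:B_g}, the last term in the count behind \eqref{eq:A_g} is uncontrolled.
\item The analogue of \eqref{eq:a_j=o(N_j)} does not follow from ``the same crossing-pair count''. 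An element $a\in A\cap[\varepsilon N_j,(1/Q-\varepsilon)N_j]$ forms level-$N_j$ crossing pairs only with $b>(1-1/Q+\varepsilon)N_j$. Since $r_\nu=1-r_\mu\leq1-1/Q$, $\nu$ has no mass there, so nothing contradicts \eqref{eq:C(N_j)=O(1)}. At level $\approx N_j/Q$ you have no $O(1)$ bound on crossing pairs at all.
\end{itemize}

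\emph{Fix.} For $K\geq1$ no micro-scale combinatorics is needed. Attach $\mathcal{L}_{[0,1/Q]}$ to the measure not carrying the last progression; you can always arrange this by merging the two. Then $\mu$ is finitely supported, with atoms at $0$ and $1/Q$ of equal mass $p$ and none in between, and $\nu$ is absolutely continuous. Hence $1/Q=(\mu*\nu)([0,1/Q])=p\,\nu([0,1/Q])$, and for small $\theta>0$
$$\frac{\mu([0,\frac1Q+\theta])\,\nu([0,\frac1Q+\theta])}{\frac1Q+\theta}\geq\frac{2p\,\nu([0,\frac1Q])}{\frac1Q+\theta}=\frac{2}{1+Q\theta}.$$
This is just your Type II rescaling applied to the innermost pair, the uniform measure on $\{j/M_K\}_{j<M_K}$ against $\mathcal{L}_{[0,1/M_K]}$. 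Passing to common continuity points as in \eqref{eq:trivial} gives $\limsup\geq2$, and Theorem \ref{thm:chen} gives the matching upper bound. Only $K=0$, the trivial pair itself, needs the combinatorial theorem of Section \ref{section:Chen_proof}, and that case is already proved there.
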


In fact, we can get a better lower bound with more information on $\{M_k\}$. However, since we simply choose a specific subsequence $\{N_j\}$ and lose some information, the idea is only available to obtain the lower bound of the upper limit instead of the exact value. Furthermore, we are not able to rule out the following case: Consider the sequence $\{d_k\}$ defined by
$$d_k=
\begin{cases}
	3,\ k=n^2\ \text{for some integer}\ n,\\
	2,\ \text{otherwise}.
\end{cases}
$$
It is possible to choose a bad subsequence, for example $N_j=\prod_{k\leq j(j+1)-1}d_k-1$, such that the corresponding Gabardo--Lai sequence $\{M_k\}$ satisfies $M_k=2$ for all $k$. Then we are only able to get a lower bound of $3/2$ but no gaps on the right side of $3/2$, while the exact value for this example is $18/11>8/5$.

Another problem for us is to find a purely combinatorial method for the result. Thanks to Zhang \cite{Zh2026}, we know that the discrete proof does exist.

\subsection{Miscellaneous}\

In Gabardo and Lai's paper \cite{GL2014}, they gave a new proof of the classification of perfect additive complements for finite sets $\{0, 1, \ldots, N-1\}$ with their convolution decomposition theorem.

The second moment in the proof of Lemma \ref{lm:unique_difference} is sometimes called the {\it additive energy}, which is useful in some combinatorial problems.

B\'ar\'any, Fang and S{\'a}ndor treated the problem from a different point of view. Our Theorem \ref{thm:perfect} gives an elementary proof of the consequence of \cite[Lemma 4.3]{BFS2023}, and a similar result follows.
\begin{theorem}
	Let $A, B$ be additive complements associated with $\{d_k\}, d_k\in\mathbb{Z}_{\geq2}$. If
	$$\limsup_kd_k=G,\ \liminf_kd_k=g,$$
	with $2\leq g\leq G<\infty$, then
	$$\liminf_{k\to\infty}D_k\in\Big[\frac{g-1}{gG-1}, \frac{1}{G+1}\Big].$$
\end{theorem}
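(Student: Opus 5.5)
We prove the statement in the same way as Theorem \ref{thm:perfect}, working with the recursion
$$D_k=\frac{1}{d_k}(1-D_{k-1}),\qquad k\geq2.$$
Put $s=\liminf_k D_k$. For large $k$ we may assume $g\leq d_k\leq G$. We also have $0<D_k\leq 1/2$ for all $k$: indeed $D_1=1/d_1\leq 1/2$, and $D_{k-1}\in(0,1/2]$ gives $1-D_{k-1}\in[1/2,1)$, hence $D_k\in(0,1/2)$.

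\emph{Upper bound.} This is unchanged from the proof of Theorem \ref{thm:perfect}. Take $d_{k_j}=G$ along a subsequence. Then
$$s\leq\liminf_j \min\Big\{\frac{1-D_{k_j-1}}{G},\,D_{k_j-1}\Big\}\leq \max_{x}\min\Big\{\frac{1-x}{G},x\Big\}=\frac{1}{G+1}.$$

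\emph{Lower bound.} Iterate the recursion twice:
$$D_k=\frac{1}{d_k}\Big(1-\frac{1-D_{k-2}}{d_{k-1}}\Big).$$
This expression decreases in $d_k$, increases in $d_{k-1}$, and increases in $D_{k-2}$. The worst case for large $k$ is therefore $d_k=G$ and $d_{k-1}=g$, which gives
$$D_k\geq\frac{1}{G}\Big(1-\frac{1-D_{k-2}}{g}\Big)=\frac{g-1+D_{k-2}}{gG}.$$
Pass to $\liminf$ on both sides. Since $k-2\to\infty$ runs through the same tail, the right side has $\liminf$ equal to $(g-1+s)/(gG)$, so
$$s\geq\frac{g-1+s}{gG}.$$
Because $gG>1$, this rearranges to $s(gG-1)\geq g-1$, that is, $s\geq (g-1)/(gG-1)$.

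This bound specializes correctly: $g=2$ gives $1/(2G-1)$, matching Theorem \ref{thm:perfect}. It is also consistent with the upper bound, since $(g-1)(G+1)\leq gG-1$ is equivalent to $g\leq G$.

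The only points needing care are the monotonicity signs in the lower bound and the fact that the range $g\leq d_k\leq G$ holds only for all sufficiently large $k$. That suffices because $\liminf$ depends only on tails. The argument contains no real obstacle.
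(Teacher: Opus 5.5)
Your proof is correct. It is exactly the adaptation of the proof of Theorem \ref{thm:perfect} that the paper has in mind; the paper gives no separate proof of this result and says only that it follows similarly. The upper bound reuses the subsequence with $d_{k_j}=G$ verbatim, and the lower bound replaces $d_{k-1}\geq 2$ by $d_{k-1}\geq g$ in the two-step recursion to get $s\geq (g-1+s)/(gG)$; your remarks on the monotonicity signs and on $g\leq d_k\leq G$ holding eventually (since the $d_k$ are integers) are also correct.
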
 

Fix an integer $G\geq3$. Consider the spectrum with parameter $G$:
$$\mathfrak{L}_G:=\Big\{\limsup_{x\to\infty}\frac{A(x)B(x)}{x}\mid A, B\ \text{associated with}\ \{d_k\}, \limsup_kd_k=G\Big\}\subseteq\Big[2-\frac{2}{G+2}, 2-\frac{1}{G}\Big].$$
This will lead to the further problem concerning the structure. The following is an example and was partially solved by B\'ar\'any, Fang and S{\'a}ndor \cite{BFS2023}.
\begin{question}
	Decide all accumulation points of the spectrum $\mathfrak{L}_G$.
\end{question}

\subsection*{Acknowledgements}

We thank Jin-Hui Fang for the wonderful lecture in Xi'an and further discussion on this problem during July. We are also grateful to Yong-Gao Chen and Ping Xi for helpful comments on a previous version of the paper.

The second-named author is supported in part by Shaanxi NSF (No. 2025JC-QYCX-002) and Shaanxi Fundamental Science Research Project for Mathematics and Physics (No.25JSZ007).

\bibliography{ref}
\bibliographystyle{plain}

\end{document}